\DeclareMathOperator*{\minimize}{minimize}
\def\NN{\mathbb{N}_+}
\def\RR{\mathbb{R}}
\def\Nz{\mathbb{N}}
\def\R{\mathbb{R}}
\def\EE{\mathbb{E}}
\def\PP{\mathbb{P}}
\def\U{\mathbb{U}}
\def\x0{x_0}
\newcommand{\state}{x}           
\newcommand{\State}{X}           
\newcommand{\inp}{u}             
\newcommand{\Inp}{U}             
\newcommand{\out}{y}             
\newcommand{\Out}{Y}             
\newcommand{\error}{e}           
\newcommand{\Error}{E}           
\newcommand{\wnoise}{w}          
\newcommand{\vnoise}{v}          
\newcommand{\Wnoise}{W}          
\newcommand{\Vnoise}{V}          
\newcommand{\Y}{\mathcal Y}      
\newcommand{\est}{{\hat\state}}			
\newcommand{\Kal}{K}             
\newcommand{\Reach}{\mathfrak{R}}
\newcommand{\0}{0}               
\newcommand{\I}{I}               
\newcommand{\M}{\mathcal M}      
\newcommand{\F}{\mathcal F}      
\newcommand{\Wu}{R}				
\newcommand{\Wx}{Q}				
\newcommand{\Wua}{\mathcal R}    
\newcommand{\Wxa}{\mathcal Q}    
\newcommand{\A}{A}   
\newcommand{\B}{B}   
\newcommand{\C}{C}   
\newcommand{\Aa}{\mathcal A}     
\newcommand{\Ba}{\mathcal B}     
\newcommand{\Ca}{\mathcal C}     
\newcommand{\Da}{\mathcal D}     
\newcommand{\LambdaPhi}{\Lambda^{\varphi}}
\newcommand{\LambdaPhiE}{\Lambda^{\varphi e}}
\newcommand{\LambdaPhiX}{\Lambda^{\varphi x}}
\newcommand{\LambdaWPhi}{\Lambda^{w\varphi}}
\newcommand{\LambdaPhiPhi}{\Lambda^{\varphi\varphi}}
\newcommand{\Pmat}{P}		
\newcommand{\sigmin}{\sigma_{\min}}     
\newcommand{\sigmax}{\sigma_{\max}}     
\newcommand{\ball}{{\mathcal B}}        
\DeclareMathOperator{\sat}{sat}
\newcommand{\var}[1]{\Sigma_{#1}}
	\newcommand{\Pmatone}{M}
	\newcommand{\Qmatone}{I}
	\newcommand{\estone}{\est^{(1)}}
	\newcommand{\esttwo}{\est^{(2)}}
	\newcommand{\inpone}{\inp^{(1)}}
	\newcommand{\olgain}{\boldsymbol{\eta}}
	\newcommand{\clgain}{\boldsymbol{\Theta}}
	\newcommand{\validfrom}{{T}'}
	\newcommand{\kjumpvalidfrom}{{\ceil{\validfrom/\kappa}}}
\newcommand{\ii}{i}
\newcommand{\thehorror}{\zeta}
\newcommand{\partialhorror}{\rho}
\newcommand{\fa}{\forall\,}
\newcommand{\Let}{\coloneqq}
\newcommand{\teL}{\eqqcolon}
\newcommand{\sigalg}{\mathfrak F}
\renewcommand{\le}{\leqslant}
\renewcommand{\leq}{\leqslant}
\renewcommand{\ge}{\geqslant}
\renewcommand{\geq}{\geqslant}
\newcommand{\tr}[1]{\mathsf{tr}\!\left(#1\right)}
\newcommand{\nn}{\nonumber}
\newcommand{\transp}{^\mathsf{T}}
\newcommand{\smat}[1]{\left[\begin{matrix} #1 \end{matrix}\right]}
\newcommand{\abs}[1]{\left\lvert{#1}\right\rvert}
\newcommand{\ceil}[1]{\left\lceil{#1}\right\rceil}
\newcommand{\AssmpEnd}{\hspace{\stretch{1}}{$\diamondsuit$}}
\newcommand{\norm}[1]{\left\lVert{#1}\right\rVert}
\newcommand{\eps}{\varepsilon}
\newtheorem{theorem}{Theorem}
\newtheorem{corollary}[theorem]{Corollary}
\newtheorem{lemma}[theorem]{Lemma}
\newtheorem{proposition}[theorem]{Proposition}
\newtheorem{scholium}[theorem]{Scholium}
\newtheorem{fact}[theorem]{Fact}
\theoremstyle{remark}
\newtheorem{assumption}[theorem]{Assumption}
\numberwithin{equation}{section}
\begin{document}

\title[Stochastic Receding Horizon Control\ldots]{Stochastic receding horizon control with output feedback and bounded control inputs}

\thanks{This research was partially supported by the Swiss National Science Foundation under grant 200021-122072 and by the European Commission under the project Feednetback FP7-ICT-223866 (www.feednetback.eu). This article was not presented at any IFAC meeting.}
\thanks{P.\ Hokayem, D.\ Chatterjee, F. Ramponi, and J.\ Lygeros are with the Automatic Control Laboratory, ETH Z\"urich, Physikstrasse 3, 8092 Z\"urich, Switzerland. E.\ Cinquemani is with the INRIA Grenoble-Rh\^one-Alpes. Corresponding author P.\ Hokayem Tel.\ +41 44 632 0403. Fax:\ +41 44 632 1211.}
\thanks{Email: \{hokayem,chatterjee,ramponif,lygeros\}@control.ee.ethz.ch,\\ Eugenio.Cinquemani@inria.fr}

\author[P.~Hokayem]{Peter Hokayem}
\author[E.~Cinquemani]{Eugenio Cinquemani}
\author[D.~Chatterjee]{Debasish Chatterjee}
\author[F.~Ramponi]{Federico Ramponi}
\author[J.~Lygeros]{John Lygeros}

\keywords{Predictive control, output feedback, constrained control, state estimation, stochastic control}

\begin{abstract}
	We provide a solution to the problem of receding horizon control for stochastic discrete-time systems with bounded control inputs and imperfect state measurements. For a suitable choice of control policies, we show that the finite-horizon optimization problem to be solved on-line is convex and successively feasible.  Due to the inherent nonlinearity of the feedback loop, a slight extension of the Kalman filter is exploited to estimate the state optimally in mean-square sense. We show that the receding horizon implementation of the resulting control policies renders the state of the overall system mean-square bounded under mild assumptions. Finally, we discuss how some of the quantities required by the finite-horizon optimization problem can be computed off-line, reducing the on-line computation, and present some numerical examples.
\end{abstract}

\maketitle

\section{Introduction}
A considerable amount of research effort has been devoted to deterministic receding horizon control, see, for example, \citep{MayneRawlingsRaoScokaert-00,BemporadMorari-99,ref:lazarbemporad07,ref:maciejowskibk,ref:JoeQinBadgwell-03} and references therein. Currently we have readily available conclusive proofs of successive feasibility and stability of receding horizon control laws in the noise-free deterministic setting. These techniques can be readily extended to the robust case, i.e., whenever there is an additive noise of bounded nature entering the system. The counterpart for stochastic systems subject to process noise and imperfect state measurements and bounded control inputs, however, is still missing. The principal obstacle is posed by the fact that it may not be possible to determine an a priori bound on the support of the noise, e.g., whenever the noise is additive and Gaussian. This extra ingredient complicates both the stability and feasibility proofs manifold: the noise eventually drives the state outside any bounded set no matter how large the latter is taken to be, and employing any standard linear state feedback means that any hard bounds on the control inputs will eventually be violated.

In this article we propose a solution to the general receding horizon control problem for linear systems with noisy process dynamics, imperfect state information, and bounded control inputs. Both the process and measurement noise sequences are assumed to enter the system in an additive fashion, and we require that the designed control policies satisfy hard bounds. Periodically at times $t=0,N_c,2N_c,\cdots$, where $N_c$ is the control horizon, a certain finite-horizon optimal control problem is solved for a prediction horizon $N$. The underlying cost is the standard expectation of the sum of cost-per-state functions that are quadratic in the state and control inputs \citep{ref:Ber-05}. We also impose some variance-like bounds on the predicted future states and inputs---this is one possible way to impose soft state-constraints that are in spirit similar to integrated chance-constraints, e.g., in \citep{ref:Han-83, ref:Han-06}.

There are several key challenges that are inherent to our setup. First, since state-information is incomplete and corrupt, the need for a filter to estimate the state from the corrupt state measurements naturally presents itself. Second, it is clear that in the presence of unbounded (e.g., Gaussian) noise, it is not in general possible to ensure any bound on the control values with linear state feedback alone (assuming complete state information is available)---the additive nature of the noise ensures that the states exit from any fixed bounded set at some time almost surely. We see at once that nonlinear feedback is essential, and this issue is further complicated by the fact that only incomplete and imperfect state-information is available. Third, it is unclear whether applicatino of the control policies stabilizes the system in any reasonable sense.

In the backdrop of these challenges, let us mention the main contributions of this article. We
\begin{enumerate}[label=(\roman*), leftmargin=*, align=right, widest=ii]
  \item provide a \emph{tractable} method for designing, in a receding horizon fashion, bounded causal nonlinear feedback policies, and
  \item guarantee that applying the designed control policies in a receding horizon fashion renders the state of the closed-loop system \emph{mean-square bounded for any initial state statistics}.
\end{enumerate}
We elaborate on the two points above:
\begin{enumerate}[label=(\roman*), leftmargin=*, align=right, widest=ii]
	\item \emph{Tractability:} Given a subclass of causal policies, we demonstrate that the underlying optimal control problem translates to a convex optimization problem to be solved every $N_c$ time steps, that this optimization problem is feasible for any statistics of the initial state, and equivalent to a quadratic program. The subclass of polices we employ is comprised of open-loop terms and nonlinear feedback from the innovations. Under the assumption that the process and measurement noise is Gaussian, (even though the bounded inputs requirement makes the problem inherently nonlinear and it may appear that standard Kalman filtering may not apply,) it turns out that Kalman filtering techniques can indeed be utilized. We provide a low-complexity algorithm (essentially similar to standard Kalman filtering) for updating this conditional density, and report tractable solutions for the {\em off-line} computation of the time-dependent optimization parameters.
	\item \emph{Stability:} It is well known that for noise-free discrete-time linear systems it is not possible to globally asymptotically stabilize the system, if the usual matrix $A$ has unstable eigenvalues, see, for example, \citep{ref:YangSontagSussmann-97} and references therein. Moreover, in the presence of stochastic process noise the hope for achieving asymptotic stability is obviously not realistic. Therefore, we naturally relax the notion of stability into mean-square boundedness of the state and impose the extra conditions that the system matrix $A$ is Lyapunov (or neutrally) stable and that the pair $(A,B)$ is stabilizable. Under such standard assumptions, we show that it is possible to augment the optimization problem with an extra {\em stability constraint}, and, consequently, that the successive application of our resulting policies renders the state of the overall system (plant and estimator) mean-square bounded.
\end{enumerate}

\subsection*{Related Works}
The research on stochastic receding horizon control is broadly subdivided into two parallel lines: the first  treats multiplicative noise that enters the state equations, and the second caters to additive noise. The case of multiplicative noise has been treated in some detail in \citep{Primbs-07, PrimbsSung-09, ref:CanKouWu-09}, where the authors consider also soft constraints on the state and control input. However, in this article we exclusively focus on the case of additive noise.

The approach proposed in this article stems from and generalizes the idea of affine parametrization of control policies for finite-horizon linear quadratic problems proposed in \citep{ref:ben-tal04,Ben-TalBoydNemirovski-06}, utilized within the robust MPC framework in \citep{Ben-TalBoydNemirovski-06,Loefberg-03,GoulartKerriganMaciejowski-06} for full state feedback, and in \citep{vanHessemBosgra-03} for output feedback with Gaussian state and measurement noise inputs. More recently, this affine approximation was utilized in \citep{SkafBoyd-09} for both the robust deterministic and the stochastic setups in the absence of control bounds, and optimality of the affine policies in the scalar deterministic case was reported in \citep{BerIanPar-09}. In \citep{BertsimasBrown-07} the authors reformulate the stochastic programming problem as a deterministic one with bounded noise and solve a robust optimization problem over a finite horizon, followed by estimating the performance when the noise can take unbounded values, i.e., when the noise is unbounded, but takes high values with low probability. Similar approach was utilized in \citep{OldewurtelJonesMorari-08} as well. There are also other approaches, e.g., those employing randomized algorithms as in \citep{batinaPhDthesis,BlackmoreWilliams-07,MaciejowskiLecchiniLygeros-05}. Results on obtaining lower bounds on the value functions of the stochastic optimization problem have been recently reported in \citep{WangBoyd-09}.

\subsection*{Organization}
The remainder of this article is organized as follows. In Section \ref{sec:problem} we formulate the receding horizon control problem with all the underlying assumptions, the construction of the estimator, and the main optimization problems to be solved. In Section \ref{sec:mainresult} we provide the main results pertaining to tractability of the optimization problems and mean-square boundedness of the closed-loop system. We comment on the obtained results and provide some extensions in Section \ref{sec:discussion}. We provide all the needed preliminary results, derivations, and proofs in Section \ref{sec:proofs}, and some numerical examples in Section \ref{sec:examples}. Finally, we conclude in Section \ref{sec:conclusions}.

\subsection*{Notation}
Let $(\Omega,\mathfrak F, \mathbb P)$ be a general probability space, we denote the conditional expectation given the sub-$\sigma$ algebra $\mathfrak F'$ of $\mathfrak F$ as $\EE_{\mathfrak F'}[.]$. For any random vector $s$ we let $\Sigma_{s}\Let\EE[ss\transp ]$. Hereafter we let $\NN \Let \{1,2,\ldots\}$ and $\Nz \Let \NN \cup \{0\}$ be the sets of positive and nonnegative integers, respectively. We let  $\tr{\cdot}$ denote the trace of a square matrix, $\norm{\cdot}_p$ denote the standard $\ell_p$ norm, and simply $\norm{.}$ denote the $\ell_2$-norm. In a Euclidean space we denote by $\ball_r$ the closed Euclidean ball or radius $r$ centered at the origin.  For any two matrices $A$ and $B$ of compatible dimensions, we denote by $\Reach_k(A,B)$ the $k$-th step reachability matrix $\Reach_k(A,B)\Let\smat{A^{k-1}B &  \cdots & AB & B}$. For any matrix $M$, we let $\sigmin$ and $\sigmax$ be its minimal and maximal singular values, respectively. We let $(M)_{i_1:i_2}$ denote the sub-matrix obtained by selecting the rows $i_1$ through $i_2$ of $M$, and let $(M)_i$ denote its $i$-th row.

\section{Problem Setup}\label{sec:problem}
We are given a certain plant with discrete-time dynamics, process noise $\wnoise_t$, and imperfect state measurements  $\out_t$ tainted through noise $\vnoise_t$ (Figure \ref{fig:main}). The objective is to design a receding horizon controller which renders the overall closed-loop system mean-square bounded. We discuss the structure and the main assumptions on the dynamics of the system, the performance index to be minimized, and the construction of the optimal mean-square estimator $\est_t$ in Subsection \ref{sec:dynamics}. Then, we formalize the optimization problem to be solved in a receding horizon fashion (to generate the input policies $\inp_t$) in Subsection \ref{sec:optimization}.
\begin{figure}[h]
\centering\includegraphics[width=0.7\columnwidth]{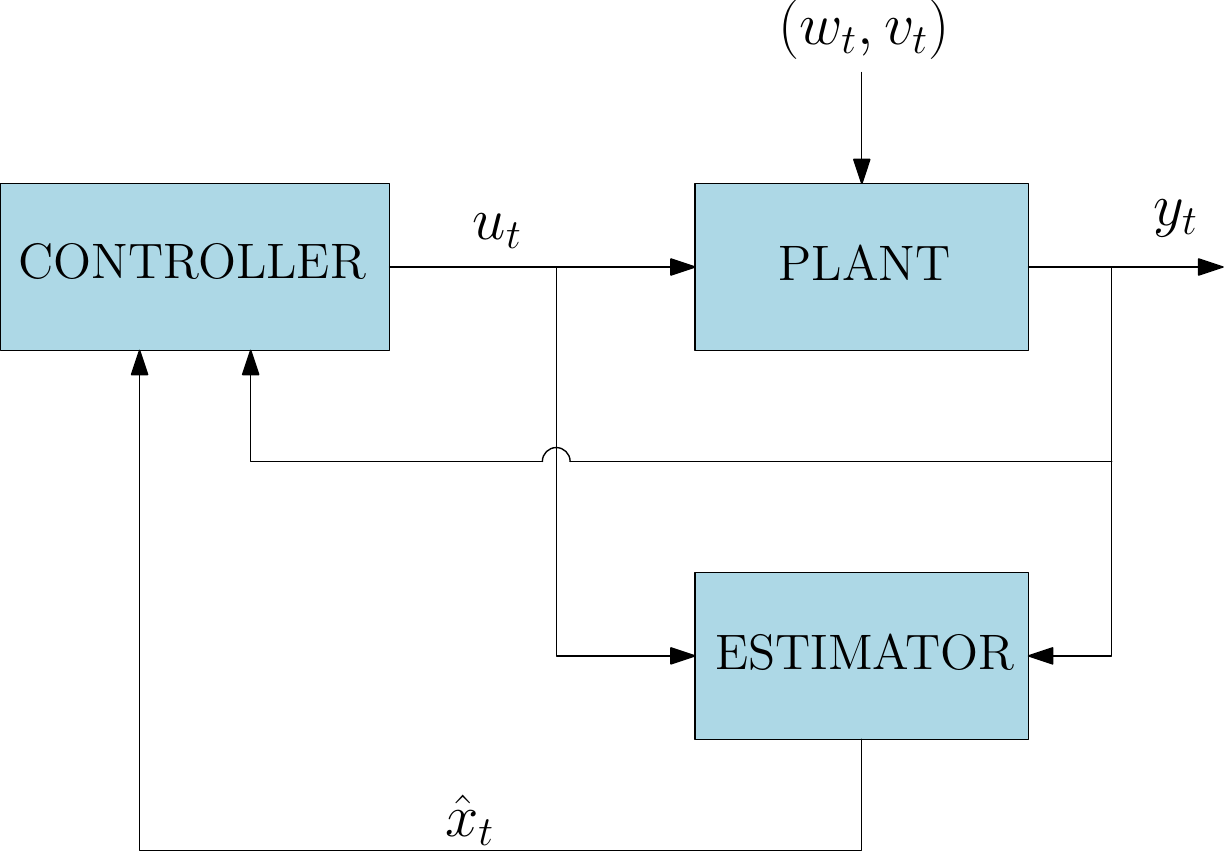}
\caption{Main setup}\label{fig:main}
\end{figure}

\subsection{Dynamics, Performance Index and Estimator}\label{sec:dynamics}
Consider the following affine
discrete-time stochastic dynamical model:
\begin{subequations}
\label{eq:system}
\begin{align}
    x_{t+1} &= \A x_t + \B u_t +w_t, \label{eq:system-a}\\
    y_t &= \C x_t + v_t \label{eq:system-b}
\end{align}
\end{subequations}
where $t\in \Nz$, $x_t\in\R^n$ is the state, $u_t\in \R^m$
is the control input, $y_t\in \mathbb R^p$ is the output,  $w_t\in\R^n$ is a random process noise, $v_t\in\R^p$ is a random measurement noise, and $\A$, $\B$, and $\C$ are known matrices. We posit the following main assumption:

\begin{assumption}
\label{ass:dynamics}
\mbox{}
\begin{enumerate}[label=(\roman*), leftmargin=*, align=right, widest=iii]
    \item The system matrices in \eqref{eq:system-a} satisfy the following:\label{ass:dynamics1}
        \begin{enumerate}[label=(\alph*), leftmargin=*, align=right]
        	\item The pair $(\A, \B)$ is stabilizable.  \label{ass:dynamics1a}
        	\item $\A$ is discrete-time Lyapunov stable \cite[Chapter 12]{ref:Ber-09}, i.e., the eigenvalues $\{\lambda_i(\A)\mid i=1,\ldots, d\}$ lie in the closed unit disc, and those eigenvalues $\lambda_j(\A)$ with $\bigl|\lambda_j(\A)\bigr|=1$ have equal algebraic and geometric multiplicities. \label{ass:dynamics1c}
        \end{enumerate}
    \item The initial condition and the process and measurement noise vectors are normally distributed, i.e., \label{ass:distributions}
	$\state_0\sim\mathcal N(0,\var{\state_0})$, $\wnoise_t\sim\mathcal N (0,\var{\wnoise})$, and $\vnoise_t\sim\mathcal N (0,\var{\vnoise})$. Moreover, $\state_0$, $(\wnoise_t)_{t\in\Nz}$ and $(\vnoise_t)_{t\in\Nz}$ are mutually independent.
    \item The control inputs take values in the control constraint set:\label{ass:dynamics6}
        \begin{equation}\tag{C1}
        \label{eq:C1}
        \U \Let \bigl\{\xi \in\R^m\,\big|\,\norm{\xi}_\infty\leq U_{\max}\bigr\};
        \end{equation}
		i.e., $\inp_t\in\U$ for each $t\in\Nz$.\AssmpEnd
\end{enumerate}
\end{assumption}

		 Without any loss of generality, we also assume that $\A$ is in real Jordan canonical form.  Indeed, given a linear system described by system matrices $\bigl(\tilde \A, \tilde \B\bigr)$, there exists a coordinate transformation in the state-space that brings the pair $\bigl(\tilde \A, \tilde \B\bigr)$ to the pair $(\A, \B)$, where $\A$ is in real Jordan form \cite[p. 150]{ref:hornjohnson}.  In particular, choosing a suitable ordering of the Jordan blocks, we can ensure that the pair $(\A,  \B)$ has the form $\left(\begin{bmatrix}\A_{1} & 0\\ 0 & \A_{2}\end{bmatrix}, \begin{bmatrix}\B_1\\ \B_2\end{bmatrix}\right)$, where $\A_{1}\in\R^{n_1\times n_1}$ is Schur stable, and $\A_{2}\in\R^{n_2\times n_2}$ has its eigenvalues on the unit circle. By hypothesis \ref{ass:dynamics1}-\ref{ass:dynamics1c} of Assumption \ref{ass:dynamics}, $\A_{2}$ is therefore block-diagonal with elements on the diagonal being either $\pm 1$ or $2\times 2$ rotation matrices. As a consequence, $\A_{2}$ is orthogonal.  Moreover, since $(\A,\B)$ is stabilizable, the pair $(\A_{2}, \B_2)$ must be reachable in a number of steps $\kappa \leq n_2$ that depends on the dimension of $\A_{2}$ and the structure of $(\A_{2}, \B_2)$, i.e., $\text{rank}(\Reach_\kappa(A_2,B_2))=n_2$.  Summing up, we can start by considering that the state equation  \eqref{eq:system-a} has the form
		\begin{equation}\label{eqn:Jordan}
			\begin{bmatrix} \state^{(1)}_{t+1}\\ -- \\ \state^{(2)}_{t+1}\end{bmatrix}
			= \begin{bmatrix} \A_{1} \state^{(1)}_t\\ -- \\ \A_{2} \state^{(2)}_t\end{bmatrix}
			+ \begin{bmatrix} \B_1\\ -- \\ \B_2\end{bmatrix} u_t
			+ \begin{bmatrix} \wnoise^{(1)}_t\\ -- \\ \wnoise^{(2)}_t\end{bmatrix},
		\end{equation}
		where $\A_{1}$ is Schur stable, $\A_{2}$ is orthogonal, and there exists a nonnegative integer $\kappa\leq n_2$ such that the subsystem $(\A_{2}, \B_2)$ is reachable in $\kappa$ steps. This integer $\kappa$ is fixed throughout the rest of the article.

Fix a prediction horizon $N\in\NN$, and let us consider, at any time $t$, the cost $V_t \Let V(t, \Y_t, u)$  defined by:
\begin{align}
    V_t &= \EE_{\Y_t}\Bigg[ \sum\limits_{k=0}^{N-1}\!\bigl(x_{t+k}\transp \Wx_k x_{t+k}+u_{t+k}\transp \Wu_k u_{t+k}\bigr)+x_{t+N}\transp \Wx_Nx_{t+N}\Bigg],\label{eq:totalcost}
\end{align}
where $\Y_t=\{y_0,y_1,\ldots, y_t\}$ is the set of outputs up to time $t$, (or more precisely the $\sigma$-algebra generated by the set of outputs,) and $\Wx_k=\Wx\transp _k>0$, $\Wx_N = \Wx_N\transp  > 0$, and $\Wu_k=\Wu\transp _k>0$ are some given symmetric matrices of
appropriate dimension, $k = 0,\ldots, N-1$.  At each time instant $t$, we are interested in minimizing \eqref{eq:totalcost} over the class of causal output feedback strategies $\mathcal G$ defined as:
\begin{equation}
\label{e:policies}
\smat{\inp_t \\ \inp_{t+1}\\ \vdots \\ \inp_{t+N-1}}\!\! = \!\!\left[ \begin{array}{l} g_t(\Y_t) \\
g_{t+1}(\Y_{t+1}) \\ \vdots \\ g_{t+N-1}(\Y_{t+N-1})\end{array}\right],
\end{equation}
for some measurable functions $\mathbf{g}_t\Let\{g_t,$ $\cdots, g_{t+N-1}\}$ which satisfy the hard constraint \eqref{eq:C1} on the inputs.

The cost $V_t$ in \eqref{eq:totalcost} is a conditional expectation given the observations up through time $t$, and as such requires the conditional density $f(\state_t|\Y_t)$ of the state given the previous and current measurements. Our choice of causal control policies in~\eqref{e:policies} motivates us to rewrite the cost $V_t$ in~\eqref{eq:totalcost} as:
\begin{align}
    V_t &=\EE_{\Y_t}\!\Bigg[\EE_{\{\Y_t,\state_t\}}\!\Bigg[\sum\limits_{k=0}^{N-1}\big(\state_{t+k}\transp \Wx_k \state_{t+k}+\inp_{t+k}\transp \Wu_k \inp_{t+k}\big) + \state_{t+N}\transp \Wx_N\state_{t+N}\Bigg]\Bigg]. \label{e:conditionalversionofcost}
\end{align}

The propagation of $f(\state_t|\Y_t)$ in time is not a trivial task in general. In what follows we shall report an iterative method for the computation of $f(\state_t|\Y_t)$, whenever the control input is a measurable deterministic feedback
from the past and present outputs. For $t,s\in \mathbb N_0$, define $\hat{\state}_{t|s}=\EE_{\Y_s} [\state_t]$ and $P_{t|s}=\EE_{\Y_s}[(\state_t-\hat{\state}_{t|s})(\state_t-\hat{\state}_{t|s})\transp ]$.

\begin{assumption}
\label{ass:noisematrices}
	We require that $\var{\wnoise}>0$ and $\var{\vnoise}>0$.\AssmpEnd
\end{assumption}
The following result is a slight extension of the usual Kalman Filter for which a proof can be found in Appendix A. An alternative proof can also be found in \cite[p.102]{ref:KumVar-86}.
\begin{proposition}\label{prop:1}
Under Assumption \ref{ass:dynamics}-\ref{ass:distributions} and Assumption \ref{ass:noisematrices}, $f(\state_t|\Y_t)$ and $f(\state_{t+1}|\Y_t)$
are the probability densities of Gaussian distributions
$\mathcal N(\hat{\state}_{t|t},P_{t|t})$ and $\mathcal N(\hat{\state}_{t+1|t},P_{t+1|t})$,
respectively, with $P_{t|t}>0$ and $P_{t+1|t}>0$. For
$t=0,1,2,\ldots,$ their conditional means and covariances can be
computed iteratively as follows:
{\small \begin{align}
\hat{\state}_{t+1|t+1}&=\hat{\state}_{t+1|t}\nn\\
&\quad +P_{t+1|t}\C\transp  (\C P_{t+1|t} \C\transp  +\var{\vnoise})^{-1}(\out_{t+1}-\C\hat{\state}_{t+1|t}),\label{eq:mupd}\\
P_{t+1|t+1}&=P_{t+1|t}-P_{t+1|t} \C\transp  (\C P_{t+1|t} \C\transp  +\var{\vnoise})^{-1}
\C P_{t+1|t},\label{eq:cupd}
\end{align}}
where
\begin{align}
\label{eq:mpred}
\hat{\state}_{t+1|t}&=\A\hat{\state}_{t|t}+\B g_t(\Y_t),\\
\label{eq:cpred} P_{t+1|t}&=\A P_{t|t}\A\transp +\var{\wnoise}.
\end{align}
\end{proposition}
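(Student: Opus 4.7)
The plan is to proceed by induction on $t$, using the standard Kalman-filter derivation but with the extra observation that since the control $\inp_t = g_t(\Y_t)$ is only required to be a measurable function of the past outputs, conditional on $\Y_t$ it acts as a deterministic vector; the Gaussian structure is therefore preserved despite the fact that the closed-loop dynamics are nonlinear in the noise history.

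Base case ($t=0$): by Assumption~\ref{ass:dynamics}-\ref{ass:distributions}, the pair $(\state_0,v_0)$ is jointly Gaussian and independent, so $(\state_0,\out_0) = (\state_0,\C\state_0+v_0)$ is jointly Gaussian. A standard conditioning argument for jointly Gaussian vectors then yields that $f(\state_0\mid\out_0)$ is Gaussian; its mean and covariance are given by \eqref{eq:mupd}--\eqref{eq:cupd} evaluated at $t=-1$ with the prior $\hat\state_{0|-1}=0$, $P_{0|-1}=\var{\state_0}$.

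Inductive step. Assume $f(\state_t\mid\Y_t)=\mathcal N(\hat\state_{t|t},P_{t|t})$ with $P_{t|t}\succeq 0$. For the time update, note that by Assumption~\ref{ass:dynamics}-\ref{ass:distributions} the noise $\wnoise_t$ is independent of $\{\state_0,\wnoise_0,\ldots,\wnoise_{t-1},\vnoise_0,\ldots,\vnoise_t\}$, hence of $\Y_t$; and because $g_t$ is measurable, the vector $\B g_t(\Y_t)$ is $\sigma(\Y_t)$-measurable and therefore behaves as a constant offset under $\EE_{\Y_t}[\cdot]$. Consequently, conditional on $\Y_t$, the right-hand side of \eqref{eq:system-a} is an affine combination of the conditionally Gaussian $\state_t$ and the independent Gaussian $\wnoise_t$, which yields that $f(\state_{t+1}\mid\Y_t)=\mathcal N(\hat\state_{t+1|t},P_{t+1|t})$ with $\hat\state_{t+1|t}$ and $P_{t+1|t}$ as in \eqref{eq:mpred}--\eqref{eq:cpred}. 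Strict positivity $P_{t+1|t}\succ 0$ follows from $\var{\wnoise}\succ 0$ in Assumption~\ref{ass:noisematrices}.

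For the measurement update, observe that $\vnoise_{t+1}$ is independent of $\Y_t$ and of $\state_{t+1}$ conditional on $\Y_t$. Hence, conditional on $\Y_t$, the pair $(\state_{t+1},\out_{t+1})=(\state_{t+1},\C\state_{t+1}+\vnoise_{t+1})$ is jointly Gaussian with covariance block $\smat{P_{t+1|t} & P_{t+1|t}\C\transp\\ \C P_{t+1|t} & \C P_{t+1|t}\C\transp+\var{\vnoise}}$. The standard Gaussian conditioning formula applied to this block, together with the identity $\sigma(\Y_{t+1})=\sigma(\Y_t,\out_{t+1})$, gives \eqref{eq:mupd}--\eqref{eq:cupd}. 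Finally, the matrix-inversion lemma rewrites the covariance update as $P_{t+1|t+1}=\bigl(P_{t+1|t}^{-1}+\C\transp\var{\vnoise}^{-1}\C\bigr)^{-1}\succ 0$, completing the induction.

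The only genuine subtlety — and the reason this is an \emph{extension} of the classical Kalman filter — is justifying that the Gaussian structure survives a nonlinear, merely measurable, output-feedback policy. The argument above localizes this to the single fact that $g_t(\Y_t)$ is $\sigma(\Y_t)$-measurable, so every conditional distribution given $\Y_t$ sees it as a constant; the independence structure of $(\state_0,\wnoise_\cdot,\vnoise_\cdot)$ guaranteed by Assumption~\ref{ass:dynamics}-\ref{ass:distributions} then does the rest.
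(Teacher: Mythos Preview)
Your proof is correct and identifies exactly the same key observation as the paper: that $g_t(\Y_t)$, being $\sigma(\Y_t)$-measurable, acts as a deterministic constant under conditioning on $\Y_t$, so the nonlinear feedback does not disturb the conditional Gaussianity. The inductive structure and the conclusions match.

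The route differs in technique. The paper inducts on the \emph{predicted} density $f(\state_t\mid\Y_{t-1})$ and works by explicit density manipulation: for the measurement update it writes $f(\state_t\mid\Y_t)\propto f(\out_t\mid\state_t,\Y_{t-1})\,f(\state_t\mid\Y_{t-1})$ via Bayes' rule and completes the square in the exponent to read off \eqref{eq:mupd}--\eqref{eq:cupd}; for the time update it avoids the Chapman--Kolmogorov integral and instead computes the conditional characteristic function $\EE[\exp(\ii z\transp\state_{t+1})\mid\Y_t]$, using the tower property and independence of $\wnoise_t$ to factor it into Gaussian pieces. You instead induct on the \emph{filtered} density $f(\state_t\mid\Y_t)$ and invoke the standard ``jointly Gaussian $\Rightarrow$ conditionally Gaussian'' lemma at both steps, building the joint law of $(\state_{t+1},\out_{t+1})$ given $\Y_t$ directly from independence. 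Your argument is more conceptual and shorter, relying on the reader knowing the block-covariance conditioning formula; the paper's argument is more self-contained and computational, deriving those formulas from scratch. Both approaches are standard in the filtering literature, and both isolate the nonlinearity issue to the single measurability observation you highlight in your final paragraph.
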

In particular, the matrix $P_{t|t}$  together with $\hat x_{t|t}$  characterize
the conditional density $f(x_t|\Y_t)$, which is needed in the computation of the cost \eqref{e:conditionalversionofcost}.
Proposition \ref{prop:1} states that the conditional mean and covariances of
$x_t$ can be propagated by an iterative algorithm which
resembles the Kalman filter. Since the system is generally nonlinear
due to the function $g$ being nonlinear, we cannot assume that the probability
distributions in the problem are Gaussian (in fact, the a priori
distributions of $x_t$ and of $\Y_t$ are not) and the proof cannot be
developed in the standard linear framework of the Kalman filter.

Hereafter, we shall denote for notational convenience by $\hat\state_t$ the estimate $\hat\state_{t|t}$, and let
\begin{equation}\label{eqn:decomposedestimate}
    \hat\state_t=\smat{\estone_t\\ -- \\\esttwo_t },
\end{equation}
which corresponds to the Jordan decomposition in \eqref{eqn:Jordan}.

\subsection{Optimization Problem and Control Policies} \label{sec:optimization}
Having discussed the iterative update of the laws of $\state_t$ given the measurements $\Y_t$ in Section \ref{sec:dynamics}, we return to our optimization problem in \eqref{e:conditionalversionofcost}. We can rewrite our optimization problem as:
\begin{align}
\label{eq:problem2}
	\min_{\mathbf{g}_t} \Bigl\{V_t\ \Big| \text{ dynamics \eqref{eq:system-a}-\eqref{eq:system-b} and constraint \eqref{eq:C1}} \Bigr\}, \tag{OP1}
\end{align}
where the collection of functions $\mathbf g_t$ were defined following \eqref{e:policies}.

The explicit solution via dynamic programming to Problem  \eqref{eq:problem2} over the class of causal feedback policies $\mathcal G$, is difficult if not impossible to obtain in general \citep{ref:Ber-00Vol1,ref:Ber-07Vol2}.  One way to circumvent this difficulty is to restrict our search for feedback policies to a subclass of $\mathcal G$. This will result in a suboptimal solution to our problem, but may yield a tractable optimization problem. This is the track we pursue next.

Given a control horizon $N_c$ and a prediction horizon $N\; (\geq N_c)$, we would like to periodically solve Problem  \eqref{eq:problem2} at times $t=0,N_c,2N_c,\cdots$ over the following class of  affine-like control policies
\begin{equation}\label{eq:policy}\tag{POL}
    \inp_\ell = \eta_\ell+\sum_{i=t}^{\ell}\theta_{\ell,i}\varphi_i(\out_i-\hat \out_i) \quad \text{for all }\ell=t,\cdots,t+N-1,
\end{equation}
where $\hat\out_i=\C\hat\state_i$, and for any vector $z\in\RR^p$, $\varphi_i(z)\Let\bigl[\varphi_{i,1}(z_{1}), \ldots,$ $\varphi_{i,p}(z_{p})\bigr]\transp$, where $z_{j}$ is the $j$-th element of the vector $z$
and $\varphi_{i,j}:\R\to\R$ is any function with $\sup\limits_{s\in
\mathbb{R}}|\varphi_{i,j}(s)|\leq \varphi_{\max} < \infty$.
The feedback gains $\theta_{\ell,i}\in\R^{m\times p}$ and the affine terms
$\eta_\ell\in\R^m$ are the decision variables. The value of $\inp_\ell$ in \eqref{eq:policy} depends on the values of the measured outputs from the beginning of the prediction horizon at time $t$ up to time $\ell$ only, which requires a \emph{finite amount of memory}. Note that
 we have chosen to saturate the measurements we obtain
from the vectors $(\out_i-\hat \out_i)$ before inserting them into the control
policy. This way we do not assume that either the process noise or the measurement noise distributions are
defined over a compact domain, which is a crucial assumption in robust MPC approaches \citep{MayneRawlingsRaoScokaert-00}. Moreover, the choice of element-wise saturation functions $\varphi_i(\cdot)$ is left open. As such, we can
accommodate standard saturation, piecewise linear, and sigmoidal
functions, to name a few.

Finally, we collect all the ingredients of the stochastic receding horizon control problem in Algorithm \ref{algo:general} below.
\begin{algorithm}[hhh]
\caption{Basic Stochastic Receding Horizon Algorithm}\label{algo:general}
\begin{algorithmic}[1]
\REQUIRE density $f(\state_0|\Y_{-1})= \mathcal N(0,\var{\state_0})$
\STATE set $t=0$, $\hat \state_{0|-1}=0$, and $P_{0|-1}=\var{\state_0}$
\LOOP

    \FOR{$i=0$ to $N_c-1$}

	\STATE measure $y_{t+i}$

    \STATE update $\hat\state_{t+i}(=\hat\state_{t+i|t+i})$ and $P_{t+i|t+i}$ using  \eqref{eq:mupd}-\eqref{eq:cupd}
    \IF{$i=0$}
    \STATE solve the optimization problem \eqref{eq:problem2}  with the control policies in \eqref{eq:liftedpolicy} to obtain  $\{\inp_t^*,\ \cdots,\ \inp_{t+N-1}^*\}$
    \ENDIF
    \STATE apply $\inp^*_{t+i}$
    \STATE calculate $\hat\state_{t+i+1|t+i}$ and $P_{t+i+1|t+i}$ using \eqref{eq:mpred}-\eqref{eq:cpred}

    \ENDFOR

\STATE set $t=t+N_c$
\ENDLOOP
\end{algorithmic}
\end{algorithm}

\subsection*{Compact Notation}\label{sec:lifted}
In order to state the main results in Section \ref{sec:mainresult}, it is convenient to utilize a more compact notation that describes the evolution of all signals over the entire prediction horizon $N$.

The  evolution of the system dynamics \eqref{eq:system-a}-\eqref{eq:system-b}
over a single prediction horizon $N$, starting at $t$, can be described in a \emph{lifted}
form as follows:
\begin{equation}
\label{eq:compactdyn}
\begin{aligned}
\State_t  &= \Aa\state_t+ \Ba \Inp_t + \Da \Wnoise_t \\
\Out_t  &= \Ca \State_t +\Vnoise_t
\end{aligned}
\end{equation}
where $\State_t= \smat{\begin{smallmatrix}
\state_t \\ \state_{t+1}  \\ \vdots \\ \state_{t+N}
\end{smallmatrix}}$, $\Inp_t   = \smat{\begin{smallmatrix}
\inp_t \\ \inp_{t+1} \\ \vdots \\ \inp_{t+N-1}
\end{smallmatrix}}$, $\Wnoise_t   =\smat{
\begin{smallmatrix}
\wnoise_t \\ \wnoise_{t+1} \\ \vdots \\ \wnoise_{t+N-1}
\end{smallmatrix}}$, $\Out_t   = \smat{\begin{smallmatrix}
\out_t \\ \out_{t+1}  \\ \vdots \\ \out_{t+N}
\end{smallmatrix}}$, $\Vnoise_t   = \smat{\begin{smallmatrix}
\vnoise_t \\ \vnoise_{t+1} \\ \vdots \\ \vnoise_{t+N}
\end{smallmatrix}}$, $ \Aa =
\smat{\begin{smallmatrix}
I \\ \A \\ \vdots \\ \A^N
\end{smallmatrix}}$, \[\Ba=
\smat{\begin{smallmatrix}
 \0 &\cdots &\cdots & \0 \\
\B &\ddots &&\vdots\\
\A\B & \B &\ddots & \vdots\\
\vdots && \ddots & \0\\
\A^{N-1} \B & \cdots & \A\B& \B
\end{smallmatrix}}, \Da =\smat{
\begin{smallmatrix}
\0 &  \cdots &\cdots& \0 \\
\I & \ddots & &\vdots \\
\A & \I &\ddots &\vdots \\
\vdots && \ddots & \0\\
\A^{N-1} &\cdots &\A & \I
\end{smallmatrix}},\]
and $\Ca=\mathrm{diag}\{\C,\cdots,\C\}$.
Let \[\Kal_t \Let (\A P_{t|t}\A\transp +\var{w})\C\transp(\C (\A P_{t|t}\A\transp +\var{w}) \C\transp +\var{v})^{-1}\] be the usual Kalman gain and define $\Gamma_t \Let I-\Kal_t\C$, and $\Phi_t \Let \Gamma_t \A$. Then, we can write the estimation error vector as
\begin{equation}\label{eq:augmentederror}
    \Error_t\Let \State_t-\hat\State_t = \F^{e}_{t}\error_t + \F^{\wnoise}_{t}\Wnoise_t -\F^{\vnoise}_{t}\Vnoise_t,
\end{equation}
where $\error_t=\state_t-\hat\state_t$, $ \F^{e}_{t} =
	\begin{tiny}
		\smat{I \\ \Phi_{t} \\ \Phi_{t+1}\cdot\Phi_{t}\\ \vdots \\ \Phi_{t+N-1}\cdot\Phi_{t+N-2}\cdots\Phi_{t}}
	\end{tiny}$,
\begin{align*}
    \F^{\wnoise}_{t} &=
	\begin{tiny}
	\smat{\begin{array}{l|l|l|l}\0 &   & \0 & \0 \\
    \Gamma_{t} &   & \0 & \0\\
    \Phi_{t+1}\Gamma_{t} &   & \0 & \0\\
    \vdots &  \cdots & \vdots &\vdots \\
    \Phi_{t+N-2}\cdots\Phi_{t+1}\Gamma_{t} & & \Gamma_{t+N-2} & \0 \\
    \Phi_{t+N-1}\cdots\Phi_{t+1}\Gamma_{t} &  & \Phi_{t+N-1}\Gamma_{t+N-2} & \Gamma_{t+N-1}  \end{array}}
	\end{tiny}, \\
    \F^{\vnoise}_{t} \!&=\!\!
	\begin{tiny}
	\smat{\begin{array}{l|l|l|l|l}
    \0 & \0  & &\0 & \0  \\
    \0 & \Kal_{t} &  & \0 &\0\\
    \0 & \Phi_{t+1}\Kal_{t} & &\0&\0 \\
    \vdots &\vdots &\!\!\cdots\!\!&\vdots&\vdots  \\
    \0&\Phi_{t+N-2}\cdots\Phi_{t+1}\Kal_{t} &&\Kal_{t+N-2}& \0\\
    \0 & \Phi_{t+N-1}\cdots\Phi_{t+1}\Kal_{t} &&\Phi_{t+N-1}\Kal_{t+N-2}& \Kal_{t+N-1}    \end{array}}
	\end{tiny}\!\!.
\end{align*}
Finally, the innovations sequence can be written as
\begin{equation}\label{eq:innovations}
    \Out_t-\hat\Out_t = \Ca\F^{\error}_{t}\error_t+\Ca\F^{\wnoise}_{t}\Wnoise_t+(\I-\Ca\F^{\vnoise}_{t})V_t,
\end{equation}
where $\hat\Out_t \Let \Ca\hat\State_t $. Consequently, \emph{the innovations sequence over the prediction horizon is independent of the inputs vector $\Inp_t$}.

The cost function \eqref{eq:totalcost} at time $t$ can be written as
\begin{equation}\label{eq:totalcost2}
 V_t  =  \EE_{\Y_t}\bigl[\State_t\transp \Wxa \State_t+\Inp_t\transp \Wua \Inp_t\bigr] ,
\end{equation}
where $\Wxa=\text{diag}\{\Wx_0,\cdots, \Wx_n\}$ and $\Wua=\text{diag}\{\Wu_0,\cdots, \Wu_{N-1}\}$. Also, the control policy \eqref{eq:policy} at time $t$ is given by
\begin{equation}\label{eq:liftedpolicy}
\Inp_t = \olgain_t + \clgain_t\varphi(\Out_t-\hat\Out_t), \tag{POL}
\end{equation}
where
\begin{align*}
\olgain_t \!:=\!\!\smat{
 \eta_{t} \\  \eta_{t+1} \\ \vdots \\
\eta_{t+N-1}}\!,
\varphi(\Out_t\!-\!\hat \Out_t)\!\Let\!\!\smat{\varphi_0(\out_t\!-\!\hat\out_t)\\ \vdots \\
\varphi_{N-1}(\out_{t+N-1}\!-\!\hat\out_{t+N-1})}\!,
\end{align*}
and $\clgain_t$ has the following lower block diagonal structure
\begin{equation}
\label{eq:decision-constraints}
\clgain_t \Let
\smat{  \theta_{t,t}& 0 & \hdots & 0\\
 \theta_{t+1,t} &  \theta_{t+1,t+1}&  & \vdots \\
\vdots &\vdots &\ddots & 0 \\
 \theta_{t+N-1,t} & \theta_{t+N-1,t+1} &\hdots &
 \theta_{t+N-1,t+N-1}}.
\end{equation}
Since the innovation vector $\Out_t-\hat\Out_t$ in \eqref{eq:innovations} is not a function of $\olgain_t$ and $\clgain_t$, the control inputs $\Inp_t$ in \eqref{eq:policy} remain affine in the decision variables. This fact is important to show convexity of the optimization problem, as will be seen in the next section. Finally, the constraint \eqref{eq:C1} can be rewritten as:
\begin{align}
&\Inp_t  \in {\mathbb{U}}\Let \bigl\{\xi\in\mathbb R^{Nm}\big|
\norm{\xi}_\infty \leq U_{\max}\bigr\}.\label{eq:bddu2}\tag{C1}
\end{align}

\section{Main Results}\label{sec:mainresult}

The optimization problem \eqref{eq:problem2} we have stated so far, even if it is successively feasible, does not in general guarantee  stability of the resulting receding horizon controller in Algorithm \ref{algo:general}. Unlike deterministic stability arguments utilized in MPC (see, for example, \citep{MayneRawlingsRaoScokaert-00}), we \emph{cannot} assume the existence of a final region that is control positively invariant and which is crucial to establish stability. This is simply due to the fact that the process  noise sequence is not assumed to have a compact domain of support. However, guided by our earlier results in \citep{ref:RamChaMilHokLyg-09}, we shall enforce an extra \emph{stability constraint} which, if feasible, can render the state of the closed-loop system mean-square bounded.

For $t\in\Nz$, the state estimate at time $t + \kappa$ given the state estimate at time $t$, the control inputs from time $t$ through $t + \kappa - 1$, and the corresponding process and measurement noise sequences can be written as
\[
	\est_{t+\kappa|t+\kappa} = A^\kappa \est_{t|t} + \Reach_\kappa(A, B)\smat{\inp_{t}\\ \vdots \\ \inp_{t+\kappa-1}} + \Xi_t,
\]
where we define
\begin{equation}
\label{eq:theuglyterm}
\begin{aligned}
	\Xi_t & \Let \smat{\A^{\kappa-1}\Kal_{t}\C\A,\A^{\kappa-2}\Kal_{t+1}\C\A, \cdots, \Kal_{t+\kappa-1}\C\A }\smat{\error_t\\ \vdots \\ \error_{t+\kappa-1}}\\ &
\quad + \smat{\A^{\kappa-1}\Kal_{t}\C,\A^{\kappa-2}\Kal_{t+1}\C, \cdots, \Kal_{t+\kappa-1}\C}\smat{\wnoise_t\\ \vdots \\ \wnoise_{t+\kappa-1}}\\
&\quad 	  + \smat{\A^{\kappa-1}\Kal_{t},\A^{\kappa-2}\Kal_{t+1}, \cdots, \Kal_{t+\kappa-1}}\smat{\vnoise_{t+1}\\ \vdots \\ \vnoise_{t+\kappa}}.
\end{aligned}
\end{equation}
\begin{scholium}
\label{sch:thehorror}
	There exists an integer $T$ and a positive constant $\thehorror=\thehorror(\var{\vnoise},\var{\wnoise},\A,\B,\C)$ such that
	\begin{equation}\label{eq:thehorror}
		\EE_{\Y_t}\left[\norm{\Xi_t} \right]\le\thehorror\qquad \text{for all } t\geq T.
	\end{equation}
\end{scholium}

	A proof of Scholium \ref{sch:thehorror} appears in Section \ref{sec:proofs}. Using the constant $\thehorror$, we now require the following ``drift condition'' to be satisfied: for  any given $\eps > 0$ and for every $t=0,N_c,2N_c,\cdots$, there exists a $\Inp_t\in\U$, such that the following condition is satisfied
	\begin{align}
	\label{eq:C4}\tag{C2}
		& \norm{\A_{2}^\kappa\esttwo_{t}+\Reach_\kappa(\A_{2},\B_{2})\smat{\inp_t\\ \vdots \\ \inp_{t+\kappa-1}}}\leq \bigl\|\esttwo_{t}\bigr\|-(\thehorror + \tfrac{\eps}{2})\;\;\text{whenever }\bigl\|\esttwo_{t}\bigr\|\geq \thehorror +\eps.
	\end{align}
	Note that $\smat{\inp_t\\ \vdots \\ \inp_{t+\kappa-1}} = (\olgain_t)_{1:\kappa m} + (\clgain_t)_{1:\kappa m}\varphi(\Out - \hat\Out)$. (For notational convenience, we have retained $\varphi(\Out - \hat\Out)$ with the knowledge that the matrix $(\clgain_t)_{1:\kappa m}$ causally selects the outputs as they become available, see \eqref{eq:decision-constraints}.) The constraint \eqref{eq:C4} pertains only to the second subsystem of the estimator, as the first subsystem ($\hat\state^{(1)}$) is Schur stable (see \eqref{eqn:Jordan} and \eqref{eqn:decomposedestimate}). We augment problem \eqref{eq:problem2} with the stability constraint \eqref{eq:C4} to obtain
	\begin{align}
		\min \Bigl\{V_t\ \Big| \text{ dynamics \eqref{eq:system-a}-\eqref{eq:system-b}, policies \eqref{eq:policy}, and constraints \eqref{eq:C1} \& \eqref{eq:C4}} \Bigr\}. \tag{OP2}\label{eq:problem1p}
	\end{align}

\begin{assumption}
\label{ass:stability1}
	In addition to Assumption \ref{ass:dynamics} and Assumption \ref{ass:noisematrices}, we stipulate that:
	\begin{enumerate}[label=(\roman*), leftmargin=*, align=right, widest=iii]
		\item The control authority $U_{\max}\geq U_{\max}^*$, where $U_{\max}^*\Let \sigma_{\min}(\Reach_\kappa(\A_2,\B_2))^{-1}\big(\thehorror + \tfrac{\eps}{2}\big)$.
    	\item The control horizon $N_c\geq \kappa$, where $\kappa$ is the reachability index.
    	\item $\bigl(\A,\var{w}^{1/2}\bigr)$ is stabilizable, and $(\A, \C)$ is observable.\AssmpEnd\label{ass:stability1:systemandnoise}

	\end{enumerate}
\end{assumption}

\begin{theorem}[Main Result]
\label{thm:main}
	Consider the system \eqref{eq:system-a}-\eqref{eq:system-b}, and suppose that Assumption \ref{ass:stability1} holds. Then:
	\begin{enumerate}[label={\rm (\roman*)}, leftmargin=*, align=right, widest=ii]
		\item For every time $t = 0, N_c, 2N_c, \cdots$, the optimization problem \eqref{eq:problem1p} is convex, feasible, and can be approximated to the following quadratic optimization problem:\label{maintheorem:optproblem}
			\begin{align}
				\minimize_{(\olgain_{t},\clgain_{t})} \quad	& 2\hat x_t\transp \Aa\transp \Wxa \Ba\olgain_{t} + 2 \tr{\Aa\transp \Wxa \Ba \clgain_{t}\LambdaPhiX_t}+ 2\tr{\clgain_{t}\transp  \Ba\transp \Wxa \Da \LambdaWPhi_t} \nn \\
													& \;\; + \olgain_{t}\transp \M_1 \olgain_{t} + 2\olgain_{t}\transp \M_1\clgain_{t}\LambdaPhi_t + \tr{\clgain_{t}\transp \M_1\clgain_{t}\LambdaPhiPhi_t}\label{eq:wICCs}\\
				\text{\rm subject \ to}\quad & \text{the structure of}\ \clgain_t\ \text{in} \ \eqref{eq:decision-constraints}, \nn\\
				&|(\olgain_t)_i|+\norm{(\clgain_t)_i}_1\varphi_{\max}\leq U_{\max}\quad \fa i=1,\cdots,Nm, \label{eq:constr:control-bounds}\\
			    & \norm{\A_{2}^\kappa\esttwo_{t}+\Reach_\kappa(\A_{2},\B_{2})(\olgain_t)_{1:\kappa m}  }+ \sqrt{n_2}\norm{\Reach_\kappa(\A_{2},\B_{2})(\clgain_t)_{1:\kappa m}}_\infty\varphi_{\max}\nn\\
				& \quad\leq \bigl\|\esttwo_{t}\bigr\|-(\thehorror + \tfrac{\eps}{2})\;\;\text{whenever }\bigl\|\esttwo_{t}\bigr\| \geq \thehorror +\eps\label{eq:constr:stab}
			\end{align}
			where $\mathcal M_1  = \Wua+ \Ba\transp \Wxa \Ba$,
			\begin{equation}
			\label{eq:Lambdas}
			\begin{aligned}
		   		\LambdaPhi_t & = \EE_{\Y_t}[\varphi(\Out-\hat\Out)], &  \LambdaPhiX_t & =\EE_{\Y_t}[\varphi(\Out-\hat\Out)\state_t\transp ],\\
			   	\LambdaWPhi_t & = \EE_{\Y_t}[\Wnoise\varphi(\Out-\hat\Out)\transp ], &  \LambdaPhiPhi_t & =\EE_{\Y_t}[\varphi(\Out-\hat\Out)\varphi(\Out-\hat\Out)\transp ].   \\
			\end{aligned}
			\end{equation}
		\item For every initial state and noise statistics $\bigl((\hat\state_0, \Sigma_{\state_0}), \Sigma_{\wnoise}, \Sigma_{\vnoise}\bigr)$, successive application of the control laws given by the optimization problem in \ref{maintheorem:optproblem} for $N_c$ steps renders the closed-loop system mean-square bounded in the following sense: there exists a constant $\gamma = \gamma(\Y_0, \kappa, \hat\state_0, \Sigma_{\state_0}, \Sigma_{\wnoise}, \Sigma_{\vnoise}, U_{\max}^*)>0$ such that\label{maintheorem:msbdd}
			\begin{equation}\label{eq:MSbound1}
		   	 \sup_{t\in\Nz}\;\EE_{\Y_0}\bigl[\norm{\state_t}^2\bigr]\leq \gamma.
			\end{equation}
	\end{enumerate}
\end{theorem}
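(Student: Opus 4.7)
I will start by expressing the closed-loop state and input trajectories in the compact form $\State_t = \Aa \state_t + \Ba \Inp_t + \Da \Wnoise_t$ and $\Inp_t = \olgain_t + \clgain_t \varphi(\Out_t - \hat\Out_t)$. Substituting the second into the first and then into $V_t = \E{\State_t^\transp \Wxa \State_t + \Inp_t^\transp \Wua \Inp_t \mid \Y_t}$ yields an expression whose $\olgain_t$--$\clgain_t$ dependence is fully quadratic, because the innovation sequence $\Out_t-\hat\Out_t$ is independent of the decision variables by \eqref{eq:innovations}. Expanding the products and pulling $\olgain_t,\clgain_t$ outside the conditional expectation, the dependence on the noise collapses into the four moments $\LambdaPhi_t, \LambdaPhiX_t, \LambdaWPhi_t, \LambdaPhiPhi_t$ defined in \eqref{eq:Lambdas}; terms not containing the decision variables can be dropped, which yields exactly the objective \eqref{eq:wICCs}. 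The Hessian with respect to $(\olgain_t,\clgain_t)$ is built from $\mathcal M_1 = \Wua + \Ba^\transp \Wxa \Ba$ (positive definite since $\Wua>0$), so the objective is convex in the decision variables.

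\textbf{Plan for part (i): constraint reformulation and feasibility.} The hard constraint \eqref{eq:C1} requires $\abs{(\olgain_t)_i + (\clgain_t)_i \varphi(\Out_t - \hat\Out_t)}\le U_{\max}$ almost surely. Using the triangle inequality and $\norm{\varphi}_\infty \le \varphi_{\max}$, a sufficient deterministic condition is $\abs{(\olgain_t)_i} + \norm{(\clgain_t)_i}_1 \varphi_{\max}\le U_{\max}$, giving \eqref{eq:constr:control-bounds}; this is linear in the decision variables. For the stability constraint \eqref{eq:C4}, I write the $\kappa$-step input as $(\olgain_t)_{1:\kappa m} + (\clgain_t)_{1:\kappa m}\varphi$ and apply the triangle inequality together with the bound $\norm{\Reach_\kappa(\A_2,\B_2)(\clgain_t)_{1:\kappa m}\varphi}\le\sqrt{n_2}\norm{\Reach_\kappa(\A_2,\B_2)(\clgain_t)_{1:\kappa m}}_\infty\varphi_{\max}$ (using $\norm{x}\le\sqrt{n_2}\norm{x}_\infty$ on $\mathbb{R}^{n_2}$) to obtain \eqref{eq:constr:stab}, which is convex. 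For feasibility, set $\clgain_t=0$ and choose $(\olgain_t)_{1:\kappa m}$ by a minimum-norm inversion: reachability of $(\A_2,\B_2)$ gives $(\olgain_t)_{1:\kappa m}= -\Reach_\kappa(\A_2,\B_2)^\dagger \A_2^\kappa \esttwo_{t}$, whose $\infty$-norm is bounded by $\sigma_{\min}(\Reach_\kappa(\A_2,\B_2))^{-1}\norm{\A_2^\kappa\esttwo_t}$; since $\A_2$ is orthogonal, $\norm{\A_2^\kappa\esttwo_t}=\norm{\esttwo_t}$, and when $\norm{\esttwo_t}\ge\thehorror+\eps$ we only need enough authority to deliver the residual $\zeta+\eps/2$, which is guaranteed by the assumption $U_{\max}\ge U_{\max}^\ast$. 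The remaining entries of $\olgain_t$ are taken zero. Hence the problem is always feasible.

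\textbf{Plan for part (ii): mean-square boundedness.} By the decomposition \eqref{eqn:Jordan}--\eqref{eqn:decomposedestimate}, $\state_t = \hat\state_t + e_t$, and it suffices to show that $\esttwo_t$, $\estone_t$, and $e_t$ are all mean-square bounded. The error $e_t$ is handled by the classical Kalman filter stability theorem: Assumption~\ref{ass:stability1}\ref{ass:stability1:systemandnoise} ($(\A,\Sigma_w^{1/2})$ stabilizable and $(\A,\C)$ observable) ensures $P_{t|t}$ converges to a finite steady-state, so $\sup_t \E{\norm{e_t}^2}<\infty$. The Schur-stable part $\estone_t$ is driven by bounded $\inp_t$ (since $\inp_t\in\U$) and by a bounded-in-mean-square innovation; a standard geometric-decay estimate gives $\sup_t \E{\norm{\estone_t}^2}<\infty$. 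The core step is to show $\sup_t \E{\norm{\esttwo_t}^2}<\infty$. On the $\kappa$-skeleton, the estimator update from Proposition~\ref{prop:1} combined with the rewriting used in Scholium~\ref{sch:thehorror} gives $\esttwo_{t+\kappa} = \A_2^\kappa\esttwo_t + \Reach_\kappa(\A_2,\B_2)(\olgain_t)_{1:\kappa m} + \Reach_\kappa(\A_2,\B_2)(\clgain_t)_{1:\kappa m}\varphi(\cdot) + \Xi_t^{(2)}$ where $\Xi_t^{(2)}$ is the second block of $\Xi_t$. Take $\Y_t$-conditional expectation and apply \eqref{eq:constr:stab} together with $\EE_{\Y_t}\norm{\Xi_t^{(2)}}\le\zeta$ from Scholium~\ref{sch:thehorror}: on the event $\norm{\esttwo_t}\ge\zeta+\eps$, $\EE_{\Y_t}\norm{\esttwo_{t+\kappa}} \le \norm{\esttwo_t} - \eps/2$, which is the drift condition. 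Since the per-step increment $\|\esttwo_{t+\kappa}-\esttwo_t\|$ has uniformly bounded second moment (all ingredients are Gaussian with uniformly bounded covariance plus bounded control), I invoke the Foster--Lyapunov style result from \citep{ref:RamChaMilHokLyg-09} to conclude $\sup_t \E{\norm{\esttwo_t}^2}<\infty$, and the bound on $N_c-1$ intermediate steps follows because increments are mean-square bounded. Combining the three bounds yields \eqref{eq:MSbound1}.

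\textbf{Main obstacle.} The delicate point is stitching the drift \eqref{eq:C4}, which lives on the $\kappa$-skeleton of the estimator's second block, to a genuine mean-square bound on the full state sampled at every integer time, while accounting for (a) the mismatch $N_c\ge\kappa$ between control and drift horizons, (b) the conditional (not unconditional) nature of the drift, and (c) the need to marry the drift-based bound on $\esttwo_t$ with the Kalman-filter-based bound on $e_t$ without losing the independence structure that makes the Foster--Lyapunov hypothesis verifiable.
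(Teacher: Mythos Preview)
Your overall strategy matches the paper's closely, but there is a genuine gap in the feasibility argument of part~(i), and a secondary imprecision in part~(ii).

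\textbf{Feasibility.} Your candidate $(\olgain_t)_{1:\kappa m}= -\Reach_\kappa(\A_2,\B_2)^\dagger \A_2^\kappa \esttwo_{t}$ with $\clgain_t=0$ does make the left-hand side of \eqref{eq:constr:stab} equal to zero (since $\Reach_\kappa\Reach_\kappa^\dagger=I$ by full row rank), so the stability constraint is met. But the \emph{input} constraint \eqref{eq:constr:control-bounds} fails: your own bound gives $\norm{(\olgain_t)_{1:\kappa m}}\le \sigma_{\min}(\Reach_\kappa)^{-1}\norm{\esttwo_t}$, which is unbounded in $\norm{\esttwo_t}$, whereas Assumption~\ref{ass:stability1} only guarantees $U_{\max}\ge \sigma_{\min}(\Reach_\kappa)^{-1}(\zeta+\eps/2)$. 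The remark that ``we only need enough authority to deliver the residual $\zeta+\eps/2$'' is the right intuition, but your formula delivers the full $\norm{\esttwo_t}$, not the residual. The paper resolves this by \emph{saturating} the target: take
\[
(\olgain_t)_{1:\kappa m}= -\Reach_\kappa(\A_2,\B_2)^\dagger \sat_r\!\bigl(\A_2^\kappa \esttwo_{t}\bigr),\qquad r=\zeta+\tfrac{\eps}{2}.
\]
Then $\norm{(\olgain_t)_{1:\kappa m}}_\infty\le\norm{(\olgain_t)_{1:\kappa m}}\le\sigma_{\min}(\Reach_\kappa)^{-1}r=U_{\max}^\ast$, and by orthogonality of $\A_2$ the left-hand side of \eqref{eq:constr:stab} evaluates exactly to $\norm{\esttwo_t}-r=\norm{\esttwo_t}-(\zeta+\eps/2)$ whenever $\norm{\esttwo_t}\ge r$. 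Both constraints are then satisfied simultaneously. Without the saturation the argument does not close.

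\textbf{Moment condition for the drift result.} For part~(ii) you assert that a uniformly bounded \emph{second} moment of the $\kappa$-step increment $\norm{\esttwo_{t+\kappa}}-\norm{\esttwo_t}$ suffices to convert the negative drift into $\sup_t\EE\bigl[\norm{\esttwo_t}^2\bigr]<\infty$. The result the paper actually uses (Pemantle--Rosenthal, stated as Proposition~\ref{p:PR99}) requires a uniform bound on the \emph{fourth} conditional moment of the increment to conclude a uniform second-moment bound on the process. A second-moment bound on the increment is generally not enough for that conclusion. In your setting the fourth moment is indeed bounded---the increment is the sum of a deterministically bounded control contribution and the Gaussian term $\Pi^{(2)}\Xi_t$ with uniformly bounded covariance---but you should state and verify the fourth-moment hypothesis rather than the second.

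With these two fixes (saturated feasible candidate, fourth-moment check) your plan is essentially the paper's proof.
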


%
%
%
%
%
%
%
%
%
In practice, it may be also of interest to impose further some soft constraints both on the state and the input vectors. For example, one may be interested in imposing quadratic or linear constraints on the state, both of which can be captured in the following
\begin{equation}\tag{C3}\label{eq:statevar}
    \EE_{\Y_t}\left[\State_t\transp \mathcal S\State_t +\mathcal L\transp \State_t\right]\leq \alpha_t,
\end{equation}
where $\mathcal S= \mathcal S\transp\geq 0$ and $\alpha_t>0$. Moreover, expected energy expenditure constraints
can be posed as follows
\begin{equation}\tag{C4}\label{eq:inputvar}
    \EE_{\Y_t}\left[\Inp_t\transp \tilde{\mathcal S}\Inp_t\right]\leq \beta_t,
\end{equation}
where $\tilde{\mathcal S} = \tilde{\mathcal S}\transp \geq 0$ and $\beta_t>0$. In the absence of hard input constraints, such expectation-type constraints are commonly used in the stochastic MPC \citep{PrimbsSung-09,AgarwalCinquemaniChatterjeeLygeros-09} and in stochastic optimization in the form of  integrated chance constraints~\citep{ref:Han-83,ref:Han-06}. This is partly because it is not possible, without posing further restrictions on the boundedness of the process noise $w_t$, to ensure that hard constraints on the state are satisfied. For example, in the standard LQG setting nontrivial hard constraints on the system state would generally be violated with nonzero probability. Moreover, in contrast to chance constraints  where a bound is imposed on the probability of constraint violation, expectation-type constraints tend to give rise to convex optimization  problems under weak assumptions~\citep{AgarwalCinquemaniChatterjeeLygeros-09}.

We can also augment problem \eqref{eq:problem1p} with the constraints \eqref{eq:statevar} and \eqref{eq:inputvar} to obtain
	\begin{align}
		\min \Bigl\{V_t\ \Big| \text{ dynamics \eqref{eq:system-a}-\eqref{eq:system-b},  policies \eqref{eq:policy}, constraints \eqref{eq:C1}, \eqref{eq:C4}, \eqref{eq:statevar} \& \eqref{eq:inputvar}} \Bigr\}. \tag{OP3}\label{eq:problem2p}
	\end{align}
Notice that the constraints \eqref{eq:statevar} and \eqref{eq:inputvar} are not necessarily feasible at time $t$ for any choice of parameters $\alpha_t$ and $\beta_t$. As such, problem \eqref{eq:problem2p} may become infeasible over time if we simply apply Algorithm \ref{algo:general}. We therefore replace the optimization Step 7 in Algorithm \ref{algo:general} with the following subroutine for some given $\alpha_t^*$ and $\beta_t^*$ that make the constraints feasible, precision number $\delta>0$ and maximal number of iterations $\bar\nu$:

\vspace{0.3cm}
\hrule
\hrule
\vspace{0.1cm}
\noindent{\bf Subroutine 7}
\vspace{0.1cm}
\hrule

\begin{enumerate}[label={7\alph*:}, leftmargin=*, align=right]
  \item Set $\overline\alpha=\alpha_t^*$, $\underline\alpha=0$, $\overline\beta=\beta_t^*$, and $\underline\beta=0$
  \item Solve the optimization problem \eqref{eq:problem2p} using $\alpha_t=\overline\alpha$ and $\beta_t=\overline\beta$ to obtain the sequence $\{\inp_{t}^*,\inp_{t+1}^*,\cdots,\inp_{t+N-1}^*\}$
  \item Set $\nu=1$
  \item {\bf repeat}
  \item $\quad$ Set $\alpha_t=(\overline\alpha+\underline\alpha)/2$ and $\beta_t=(\overline\beta+\underline\beta)/2$
  \item $\quad$ Solve Solve the optimization problem \eqref{eq:problem2p} using the new $\alpha_t$ and $\beta_t$ to obtain \\ $\phantom{xx}$  the sequence $\{\tilde\inp_{t},\tilde\inp_{t+1},\cdots,\tilde\inp_{t+N-1}\}$
  \item $\quad$ {\bf if} step 5f is feasible {\bf then}
  \item $\quad\quad$ Set $\overline\alpha=\alpha_t$ and $\overline\beta=\beta_t$
  \item $\quad\quad$ Set $\{\inp_{t}^*,\inp_{t+1}^*,\cdots,\inp_{t+N-1}^*\}=\{\tilde\inp_{t},\tilde\inp_{t+1},\cdots,\tilde\inp_{t+N-1}\}$
  \item $\quad$ {\bf else}
  \item $\quad\quad$ Set $\underline\alpha=\alpha_t$ and $\underline\beta=\beta_t$
  \item $\quad$ {\bf end if}
  \item $\quad$ set $\nu=\nu+1$
  \item {\bf until} ($|\overline\alpha-\underline\alpha|\leq \delta$ and $|\overline\beta-\underline\beta|\leq \delta$) or $\nu>\bar\nu$

\end{enumerate}
\hrule
\vspace{0.3cm}

It may be argued that replacing Step 7 in Algorithm \ref{algo:general} with Subroutine 7 above increases the computational burden; however, the parameter $\bar \nu$ guarantees that this iterative process is halted after some prespecified number of steps in case the required precision $\delta$ is not reached in the meantime. In some instances, we may be given some fixed parameters $\hat\alpha$ and $\hat\beta$ with which the constraints \eqref{eq:statevar} and \eqref{eq:inputvar} have to be satisfied, respectively. This requirement can be easily incorporated into Subroutine 7 by setting $\underline\alpha=\hat \alpha$ and $\underline\beta=\hat \beta$ in step 7a.

\begin{assumption}\label{ass:stability2}
    At each time step $t=0,N_c,2N_c,\cdots$, the constants $\alpha^*_t$ and $\beta_t^*$ in Subroutine 7 are chosen as
        \begin{align*}
            \alpha^*_{t} & \Let 3\tr{\Aa\transp \mathcal S\Aa\EE_{\Y_t}\left[\state_t\state_t\transp\right]+\Da\transp \mathcal S\Da\var{\wnoise}}
            + 3Nm\sigma_{\max}(\Ba\transp \mathcal S\Ba)U_{\max}^2+ \mathcal L\transp \Aa\hat\state_t+\norm{\mathcal L\transp \Ba}_1U_{\max},\\
            \beta^*_{t} & \Let Nm\sigma_{\max}(\tilde{\mathcal S})U_{\max}^2.
        \end{align*}
\end{assumption}

\begin{corollary}\label{cor:main}
Consider the system \eqref{eq:system-a}-\eqref{eq:system-b}, and suppose that Assumption \ref{ass:stability1}, and \ref{ass:stability2} hold. Then:
\begin{enumerate}[label={\rm (\roman*)}, leftmargin=*, align=right, widest=ii]
	\item For every time $t=0,N_c,2N_c,\cdots$ the optimization problem \eqref{eq:problem2p} solved in Subroutine 7 is convex, feasible, and equivalent to the following quadratically constrained quadratic optimization problem:\label{maincorollary:optproblem}
\begin{align}
\minimize_{(\olgain_t,\clgain_t)} \quad  & 2\hat \state_t\transp \Aa\transp \Wxa \Ba\olgain_t+2
\tr{\Aa\transp \Wxa \Ba \clgain_t \LambdaPhiX_t}+ 2\tr{\clgain_t \transp  \Ba\transp \Wxa \Da \LambdaWPhi_t} \nn \\
&+ \olgain_t \transp \M_1 \olgain_t  +
2\olgain_t \transp \M_1\clgain_t \LambdaPhi_t+\tr{\clgain_t \transp \M_1\clgain_t \LambdaPhiPhi_t}\nn\\
\mathrm{subject \ to}\quad & \text{the structure of}\ \clgain_t \ \text{in} \ \eqref{eq:decision-constraints}, \nn\\
		&|(\olgain_t )_i|+\norm{(\clgain_t )_i}_1\varphi_{\max}\leq U_{\max}\quad \fa i=1,\cdots,Nm, \nn \\
    	& \norm{\A_{2}^\kappa\esttwo_{t}+\Reach_\kappa(\A_{2},\B_{2})(\olgain_t)_{1:\kappa m }}+ \sqrt{n_2}\norm{\Reach_\kappa(\A_{2},\B_{2})(\clgain_t)_{1:\kappa m }}_\infty\varphi_{\max}\nn\\
	    & \hspace{0.5cm}\leq \bigl\|\esttwo_{t}\bigr\|-(\thehorror + \tfrac{\eps}{2})\;\;\text{whenever }\bigl\|\esttwo_{t}\bigr\|\geq \thehorror +\eps, \ \forall i=1,\cdots, n_2\nn\\
		&   \olgain_t \transp \Ba\transp \mathcal S\Ba\olgain_t +2\olgain_t \transp \Ba\transp \mathcal S\Ba\clgain_t \LambdaPhi_t+\tr{\clgain_t \transp \Ba\transp \mathcal S\Ba\clgain_t \LambdaPhiPhi_t} \nn\\
		& \quad + 2\hat\state_t\transp  \Aa\transp \mathcal S\Ba\olgain_t  +2\tr{\Aa\transp \mathcal S\Ba\clgain_t \LambdaPhiX_t+\clgain_t \transp  \Ba\transp \mathcal S\Da \LambdaWPhi_t}\nn \\
		& \quad +\mathcal L\transp \Ba(\olgain_t +\clgain_t \LambdaPhi_t)+\tr{\Aa\transp \mathcal S\Aa \EE_{\Y_t}\left[\state_t\state_t\transp\right]}+\tr{\Da\transp \mathcal S\Da\var{w}}\nn\\
		& \quad +\mathcal L\transp \Aa\hat \state_t \leq \alpha_t,\label{eq:constraint-2}\\
		& \olgain_t \transp \tilde{\mathcal S}\olgain_t +2\olgain_t \transp \tilde{\mathcal S}\clgain_t \LambdaPhi_t+\tr{\clgain_t \transp \tilde{\mathcal S}\clgain_t \LambdaPhiPhi_t} \leq \beta_t. \label{eq:constraint-3}
\end{align}

\item For every initial state and noise statistics $\bigl((\hat\state_0, \Sigma_{\state_0}), \Sigma_{\wnoise}, \Sigma_{\vnoise}\bigr)$, successive application of the control laws given by the optimization problem in \ref{maincorollary:optproblem} for $N_c$ steps renders the closed-loop system mean-square bounded in the following sense: there exists a constant $\gamma = \gamma(\Y_0, \kappa, \hat\state_0, \Sigma_{\state_0}, \Sigma_{\wnoise}, \Sigma_{\vnoise}, U_{\max}^*)>0$ such that
	\[
   	 \sup_{t\in\Nz}\;\EE_{\Y_0}\bigl[\norm{\state_t}^2\bigr]\leq \gamma.
	\]
\end{enumerate}
\end{corollary}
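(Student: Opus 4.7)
The plan is to reduce Corollary~\ref{cor:main} to Theorem~\ref{thm:main} by showing that (a)~augmenting the optimization \eqref{eq:problem1p} with the additional expectation constraints \eqref{eq:statevar}, \eqref{eq:inputvar} preserves convexity and produces a QCQP of the advertised form, (b)~the specific choices of $\alpha_t^{*}, \beta_t^{*}$ in Assumption~\ref{ass:stability2} guarantee feasibility of the augmented problem at every $t=0,N_c,2N_c,\dots$, and (c)~mean-square boundedness follows for free since the stability constraint \eqref{eq:C4} is still imposed. Only (b) requires any real work; (a) is essentially bookkeeping with the $\Lambda$-quantities in \eqref{eq:Lambdas}, and (c) reduces to invoking Theorem~\ref{thm:main}\ref{maintheorem:msbdd}.

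For part~(i), I would first inherit from Theorem~\ref{thm:main}\ref{maintheorem:optproblem} the fact that the objective \eqref{eq:wICCs} together with \eqref{eq:constr:control-bounds} and \eqref{eq:constr:stab} is convex in $(\olgain_t,\clgain_t)$. It then suffices to turn the two new constraints into convex quadratic inequalities. Using the lifted dynamics $\State_t=\Aa\state_t+\Ba\Inp_t+\Da\Wnoise_t$ together with $\Inp_t=\olgain_t+\clgain_t\varphi(\Out_t-\hat\Out_t)$, and taking conditional expectation given $\Y_t$ while exploiting that the innovations sequence is independent of $\Inp_t$ (cf.\ \eqref{eq:innovations}) and that $\Wnoise_t$ is zero-mean with $\EE_{\Y_t}[\state_t]=\hat\state_t$, one obtains exactly \eqref{eq:constraint-2} after collecting the terms involving $\LambdaPhi_t, \LambdaPhiX_t, \LambdaWPhi_t, \LambdaPhiPhi_t$. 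Convexity of the resulting quadratic form follows because $\Ba^{\transp}\mathcal S\Ba\ge 0$ and $\LambdaPhiPhi_t\ge 0$, so the Hessian in $(\olgain_t,\clgain_t)$ is positive semidefinite. An analogous but shorter computation on \eqref{eq:inputvar} gives \eqref{eq:constraint-3}, again convex because $\tilde{\mathcal S}\ge 0$ and $\LambdaPhiPhi_t\ge 0$.

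For feasibility, I would show that any $(\olgain_t,\clgain_t)$ satisfying the hard input constraints \eqref{eq:constr:control-bounds} (whose existence is guaranteed by Theorem~\ref{thm:main}\ref{maintheorem:optproblem}, and in particular a solution also satisfying \eqref{eq:constr:stab} exists) automatically satisfies \eqref{eq:constraint-2}--\eqref{eq:constraint-3} when $\alpha_t=\alpha_t^{*}$ and $\beta_t=\beta_t^{*}$. Indeed, any $\Inp_t\in\mathbb U$ obeys $\Inp_t^{\transp}\tilde{\mathcal S}\Inp_t\le\sigma_{\max}(\tilde{\mathcal S})\|\Inp_t\|^{2}\le Nm\,\sigma_{\max}(\tilde{\mathcal S})U_{\max}^{2}=\beta_t^{*}$, giving \eqref{eq:constraint-3}. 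For \eqref{eq:statevar}, expand
\begin{align*}
\State_t^{\transp}\mathcal S\State_t &= (\Aa\state_t+\Ba\Inp_t+\Da\Wnoise_t)^{\transp}\mathcal S(\Aa\state_t+\Ba\Inp_t+\Da\Wnoise_t),
\end{align*}
use the elementary bound $(a+b+c)^{\transp}\mathcal S(a+b+c)\le 3(a^{\transp}\mathcal Sa+b^{\transp}\mathcal Sb+c^{\transp}\mathcal Sc)$ valid for $\mathcal S\ge 0$, then take conditional expectations and bound the quadratic input term by $3Nm\,\sigma_{\max}(\Ba^{\transp}\mathcal S\Ba)U_{\max}^{2}$, and finally estimate the linear term by $\mathcal L^{\transp}\Aa\hat\state_t+\|\mathcal L^{\transp}\Ba\|_1U_{\max}$ (the $\Wnoise_t$ contribution vanishes by zero-mean). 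The resulting upper bound matches $\alpha_t^{*}$. Since the bisection subroutine is initialised with $\overline\alpha=\alpha_t^{*}, \overline\beta=\beta_t^{*}$ and only overwrites the incumbent $\{\inp^{*}_{t+i}\}$ on a successful feasibility check, the subroutine terminates with a feasible policy satisfying all of \eqref{eq:constr:control-bounds}, \eqref{eq:constr:stab}, \eqref{eq:constraint-2}, \eqref{eq:constraint-3}.

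Part~(ii) is essentially a corollary of Theorem~\ref{thm:main}\ref{maintheorem:msbdd}: the mean-square boundedness proof there only uses the stability constraint \eqref{eq:C4} together with the noise statistics bounded via Scholium~\ref{sch:thehorror}, and makes no further use of the structure of the objective. Since every iteration of Subroutine~7 solves \eqref{eq:problem2p} with \eqref{eq:C4} included, the final policy $\{\inp^{*}_{t+i}\}$ emitted after $N_c$ steps satisfies the same drift inequality on $\|\hat\state^{(2)}_{t}\|$, and the Lyapunov/Foster-type argument of Theorem~\ref{thm:main} delivers the same constant $\gamma$. The main obstacle, and the only genuinely new ingredient, is the careful verification that $\alpha_t^{*}, \beta_t^{*}$ are large enough to absorb the worst-case contributions of the bounded input and of the state/noise statistics; everything else is either expansion of quadratics into the $\Lambda$-notation or direct quotation of Theorem~\ref{thm:main}.
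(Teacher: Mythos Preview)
Your proposal is correct and follows essentially the same approach as the paper: reduce everything to Theorem~\ref{thm:main}, expand the two new expectation constraints into quadratic forms in $(\olgain_t,\clgain_t)$ via the $\Lambda$-matrices, and verify feasibility by showing that the bounds $\alpha_t^{*}$ and $\beta_t^{*}$ in Assumption~\ref{ass:stability2} dominate the worst-case values under \eqref{eq:C1} using the inequality $(a+b+c)^{\transp}\mathcal S(a+b+c)\le 3(a^{\transp}\mathcal Sa+b^{\transp}\mathcal Sb+c^{\transp}\mathcal Sc)$ and the $\ell_2$--$\ell_\infty$ norm comparison. The paper's proof is in fact terser than yours on the convexity and subroutine-termination points, so your additional remarks on the positive-semidefinite Hessian and the bisection initialisation are welcome elaborations rather than departures.
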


\section{Discussion}\label{sec:discussion}
The optimization problem \eqref{eq:problem1p} being solved in Theorem \ref{thm:main} is a quadratic program (QP), whereas the optimization problem \eqref{eq:problem2p} being solved in Corollary \eqref{cor:main} is
a quadratically constrained quadratic program (QCQP) in the optimization parameters $ (\eta, \Theta)$. As such, both can be easily solved via software packages such as \texttt{cvx}~\citep{ref:boydCVX} or \texttt{yalmip} \citep{YALMIP}.

\subsection{Other Constraints and Policies}\label{sec:otherconstraints}
It is not difficult to see that constraints on the variation of the inputs of the form
\[\norm{Pu}_\infty\leq \Delta U_{\max},\]
where $ P=\begin{tiny}\smat{I & -I& 0 &\cdots & 0 \\ 0 & I& -I&\cdots &0\\ \vdots &&&& \vdots \\ 0&\cdots &0& I&-I}\end{tiny}$, can be incorporated into the optimization problems \eqref{eq:problem1p} and \eqref{eq:problem2p}.
Moreover, we can easily solve the problem using quadratic policies of the form
\begin{equation}\label{eq:quadraticpolicy}\tag{POL$'$}
    \Inp_t=\eta+\Theta\varphi(\Out_t-\hat\Out_t)+\tilde\Theta\tilde\varphi(\Out_t-\hat\Out_t),
\end{equation}
instead of \eqref{eq:policy}, where
\begin{equation*}
    {\tiny \tilde\Theta\Let \smat{\tilde\theta_{0,0}&0&\cdots & 0\\ \tilde\theta_{1,0} & \tilde\theta_{1,1} &&\vdots \\ \vdots & \vdots & \ddots & 0\\
    \tilde\theta_{N-1,0} & \tilde\theta_{N-1,1} & \cdots & \tilde\theta_{N-1,N-1}}}, \ \tilde\varphi(z)\Let {\tiny \smat{\tilde\varphi_0(\out_t-\hat\out_t)\transp \tilde\varphi_0(\out_t-\hat\out_t)\\ \vdots \\ \tilde\varphi_{N-1}(\out_{t+N-1}-\hat\out_{t+N-1})\transp \tilde\varphi_{N-1}(\out_{t+N-1}-\hat\out_{t+N-1})}},
\end{equation*}
and $\tilde\theta_{i,j}\in\R^{m\times 1}$. The underlying optimization problems  \eqref{eq:problem1p} and \eqref{eq:problem2p} with the policy \eqref{eq:quadraticpolicy} are still convex and both Theorem \eqref{thm:main} and Corollary \eqref{cor:main} still apply with minor changes.

\subsection{Off-Line Computation of the $\Lambda$ Matrices}\label{sec:computational}
At any time $t=0,N_c,2N_c,\cdots$, our ability to solve the optimization problems \eqref{eq:problem1p} and \eqref{eq:problem2p} in Theorem \ref{thm:main} and Corollary \ref{cor:main}, respectively, hinges upon our ability to compute the following matrices
\begin{equation}
\label{eq:genericLambda}
\begin{aligned}
\LambdaPhiE_t& =\EE_{\Y_t}[\varphi(\Out_t-\hat\Out_t)(\state_t-\hat{\state}_{t})\transp ], & \LambdaWPhi_t & = \EE_{\Y_t}[\Wnoise\varphi(\Out_t-\hat\Out_t)\transp ],\\
\LambdaPhi_t& =\EE_{\Y_t}[\varphi(\Out_t-\hat\Out_t)], & \LambdaPhiPhi_t & =\EE_{\Y_t}[\varphi(\Out_t-\hat\Out_t)\varphi(\Out_t-\hat\Out_t)\transp ],\\
\LambdaPhiX_t & =\LambdaPhiE_t + \LambdaPhi_t\hat{x}_{t}\transp.  & &
\end{aligned}
\end{equation}
Recall that $\Out_t-\hat\Out_t$  is the innovations sequence that was given in \eqref{eq:innovations}, and $\hat\state_t$ is the optimal mean-square estimate of $\state_t$ given the history $\Y_t$. The matrices \eqref{eq:genericLambda} may be
computed by numerical integration with respect to the independent
Gaussian measures of $w_{t},\ldots w_{t+N-1}$, of $v_{t},\ldots$ $
v_{t+N}$, and of $x_t$ given $\Y^t$. Due to the large
dimensionality of the integration space, this approach may be
impractical for online computations. One alternative approach relies
on the observation that
$\LambdaPhiE_t,\LambdaPhi_t,\LambdaWPhi_t$, and $\LambdaPhiPhi_t$ depend on $\state_t$ via the difference
$\state_t-\hat\state_{t}$. Since $\state_t-\hat\state_{t}$ is conditionally
zero-mean given $\Y^t$, we can write the dependency
of~\eqref{eq:genericLambda} on the time-varying statistics of $x_t$ given $\Y^t$ as follows:
\begin{equation}
\label{eq:paramlambda}
\begin{aligned}
&\LambdaPhiX_t(\hat{x}_{t},P_{t|t})=\LambdaPhiE_t(P_{t|t})+ \LambdaPhi_t(P_{t|t})\hat{x}_{t}\transp ,\\
& \LambdaWPhi_t(P_{t|t}), \text{ and } \LambdaPhiPhi_t(P_{t|t}).
\end{aligned}
\end{equation}
In principle one may compute \emph{off-line} and store the matrices $\LambdaPhiE_t(P_{t|t}),\LambdaPhi_t(P_{t|t}),\LambdaWPhi_t(P_{t|t})$, and $\LambdaPhiPhi_t(P_{t|t})$, which depend on the covariance matrices $P_{t|t}$, and just update online the value of $\LambdaPhiX_t(\hat\state_t,P_{t|t})$ as the estimate $\hat\state_t$ becomes available. However, this poses serious requirements in terms of memory.
A more appealing alternative is to exploit the convergence
properties of $P_{t|t}$. The following result can be inferred, for instance, from \cite[Theorem 5.1]{KamenSu-99}.
\begin{proposition}\label{prop:ARE}
Under Assumptions \ref{ass:noisematrices} and \ref{ass:stability1}-\ref{ass:stability1:systemandnoise}
\begin{itemize}
  \item the (discrete-time)
algebraic Riccati equation in $P\in\RR^{n\times n}$
  \begin{equation}\label{eq:Riccatipred}
     P=\A [P-P \C\transp (\C P\C\transp +\var{v})^{-1}\C P]\A\transp +\var{w}
  \end{equation}
   has a unique solution $P^*\geq 0$, and
  \item the sequence $P_{t+1|t}$ defined by~\eqref{eq:cupd}
and~\eqref{eq:cpred} converges to $P^*$ as $t$ tends to $\infty$, for any initial condition $P_{0|-1}\geq 0$.
\end{itemize}
\end{proposition}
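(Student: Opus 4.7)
The plan is to invoke classical results on the discrete algebraic Riccati equation (DARE), recognizing the filtering recursion \eqref{eq:Riccatipred} as the dual of a standard LQR Riccati equation. The key observation is that, via the duality $(A,B,Q,R) \leftrightarrow (A\transp, C\transp, \Sigma_v, \Sigma_w)$, the hypotheses we have been given---observability of $(A,C)$ and stabilizability of $(A, \Sigma_w^{1/2})$---map exactly onto the standard conditions (respectively, detectability of the cost/output pair and stabilizability of the dynamics/input pair) under which the LQR DARE admits a unique stabilizing positive semidefinite solution and the associated Riccati recursion converges globally. This is precisely what is packaged in Theorem 5.1 of \cite{KamenSu-99}, so the proof plan reduces to verifying that the algebraic setup matches and then quoting the cited result.

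First I would define the Riccati operator $\mathcal R(P) \Let A\bigl[P - PC\transp(CPC\transp + \Sigma_v)^{-1}CP\bigr]A\transp + \Sigma_w$ on the cone $\{P : P\ge 0\}$, and rewrite it in closed-loop form using the matrix inversion lemma,
\[
\mathcal R(P) = (A - K(P)C)\,P\,(A - K(P)C)\transp + K(P)\Sigma_v K(P)\transp + \Sigma_w,
\]
where $K(P) \Let APC\transp(CPC\transp + \Sigma_v)^{-1}$. This representation makes monotonicity and concavity of $\mathcal R$ on the PSD cone transparent, and exposes the Lyapunov-like structure needed later for the convergence argument.

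Second, for existence I would argue that stabilizability of $(A, \Sigma_w^{1/2})$ furnishes a gain $L$ such that $A - \Sigma_w^{1/2}L$ is Schur, which via duality produces a bounded upper estimate on iterates $\mathcal R^t(0)$; monotonicity then yields a limit point $P^* \ge 0$ satisfying $P^* = \mathcal R(P^*)$. For uniqueness among PSD solutions, observability of $(A,C)$ ensures that the closed-loop filter matrix $A - K(P^*)C$ is Schur, so $P^*$ is the unique stabilizing fixed point. Finally, for global convergence from any $P_{0|-1} \ge 0$, I would set $\Delta_t \Let P_{t+1|t} - P^*$, derive the perturbation identity $\Delta_{t+1} = (A - K_t C)\,\Delta_t\,(A - K(P^*)C)\transp$ plus a PSD remainder that also contracts, and conclude exponential convergence $\Delta_t \to 0$ from the Schur stability of $A - K(P^*)C$ together with continuity of $K(\cdot)$ at $P^*$.

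The main obstacle is not conceptual but bookkeeping: one must be careful to ensure the hypotheses in the cited DARE theorem are applied in the correct (dual) form, and that the contraction estimate for $\Delta_t$ is robust to the fact that $K_t = K(P_{t|t-1})$ is only asymptotically equal to $K(P^*)$. Since the excerpt only claims that the result is \emph{inferred} from \cite[Theorem 5.1]{KamenSu-99}, the cleanest write-up is to verify the correspondence of assumptions explicitly and defer the detailed DARE analysis to the cited reference.
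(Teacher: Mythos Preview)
Your proposal is correct and aligns with the paper's treatment: the paper does not supply its own proof of this proposition at all, but simply states that the result ``can be inferred, for instance, from \cite[Theorem 5.1]{KamenSu-99}'' and leaves it at that. Your plan to verify that the hypotheses of Assumptions~\ref{ass:noisematrices} and~\ref{ass:stability1}-\ref{ass:stability1:systemandnoise} match those required by the cited DARE theorem, and then defer the analysis to that reference, is exactly what the paper does---you have merely fleshed out the correspondence more explicitly than the authors.

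One minor caution on the bookkeeping you yourself flag: in your sketch the roles of the two structural hypotheses are interchanged. For the filtering Riccati it is detectability/observability of $(A,C)$ that yields boundedness of the iterates $\mathcal R^t(0)$ (take any $L$ with $A-LC$ Schur as a suboptimal filter gain to get an upper bound), while stabilizability of $(A,\Sigma_w^{1/2})$ is what guarantees that the fixed point is stabilizing and hence unique among positive semidefinite solutions. Since you ultimately cite \cite{KamenSu-99} rather than carry out the argument, this swap is harmless, but it would need correcting if you chose to write out the proof in full.
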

The assumption that $\var{v}>0$ can be relaxed to $\var{v}\geq 0$ at the price
of some additional technicality (more on this can be found in \citep{FerrantePicciPinzoni-02}). As a consequence of this result, under detectability and stabilizability assumptions, $P_{t|t}$
converges to
\begin{equation}
\label{eq:Riccatifilter}
P^\circ=P^*-P^*\C\transp  (\C P^* \C\transp  +\var{v})^{-1}\C P^*,
\end{equation}
which is the asymptotic error covariance matrix of the estimator
$\hat{x}_{t}$. Thus, neglecting the initial transient, it makes
sense to just compute off-line and store the matrices $\LambdaPhiE_t(P^\circ),\LambdaPhi_t(P^\circ),\LambdaWPhi_t(P^\circ)$, and $\LambdaPhiPhi_t(P^\circ)$, and just update the matrix
$\LambdaPhiX_t$ for new values of the estimate $\hat\state_t$.

\section{Proofs}\label{sec:proofs}
The proofs of the main results are presented as follows: We begin by showing the result in Scholium \ref{sch:thehorror}. Then, we state a fundamental result pertaining to mean-square boundedness for general random sequences in Proposition \ref{p:PR99}, which is utilized to show the mean-square boundedness conclusions of Theorem \ref{thm:main} and Corollary \ref{cor:main}. We proceed to show the first assertion \ref{maintheorem:optproblem} of Theorem \ref{thm:main} in Lemma \ref{l:maintheorem:opt}. The proof of the second assertion \ref{maintheorem:msbdd} of Theorem \ref{thm:main} starts by showing Lemmas \ref{l:PRcondition1} and \ref{l:PRcondition2} to conclude mean-square boundedness of the orthogonal subsystem $(A_2,B_2)$. We conclude the proof of Theorem \ref{thm:main} by showing mean-square boundedness of the Schur stable subsystem $(A_1,B_1)$. We end this section by proving the extra conclusions of Corollary \ref{cor:main}, beyond those present in Theorem \ref{thm:main}.

Let us now look at the estimation equation $\hat\state_t (=\hat\state_{t|t})$ in \eqref{eq:mupd} and combine it with \eqref{eq:mpred} and the output equation \eqref{eq:system-b} to obtain
	\begin{equation}
	\label{e:fundamental}
		\est_{t+1} = \A \est_{t} + \B \inp_t + \Kal_t\bigl(\C\A(\state_t - \est_{t}) + \C\wnoise_t + \vnoise_{t+1}\bigr),
	\end{equation}
	where $\Kal_t = (\A \Pmat_{t|t} \A\transp + \Sigma_{\wnoise})\C\transp\bigl(\C (\A \Pmat_{t|t} \A\transp + \Sigma_{\wnoise}) \C\transp + \Sigma_{\vnoise}\bigr)^{-1}$ is the Kalman gain. Our next Fact pertains to the boundedness of the error covariance matrices $P_{t|t}$ in \eqref{eq:cupd}.
	\begin{fact}\label{fact:factsonbounds}
		There exists $\validfrom\in\Nz$ and $\partialhorror, \partialhorror_m > 0$ such that $\tr{P_{t|t}} \le \partialhorror$ for all $t\ge\validfrom$, and $\norm{\Kal_t} \le \partialhorror_m$ for all $t\ge\validfrom$.
	\end{fact}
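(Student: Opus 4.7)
The plan is to extract both bounds as a consequence of convergence results for the filtering Riccati recursion, invoking Proposition \ref{prop:ARE} under the detectability and stabilizability hypotheses in Assumption \ref{ass:stability1}\ref{ass:stability1:systemandnoise}. Since detectability of $(\A,\C)$ follows from observability, and stabilizability of $(\A,\var{\wnoise}^{1/2})$ is assumed, Proposition \ref{prop:ARE} applies: the sequence $\{P_{t+1|t}\}_{t\ge 0}$ produced by \eqref{eq:cpred}-\eqref{eq:cupd} converges to the unique positive semidefinite stabilizing solution $P^*$ of the Riccati equation \eqref{eq:Riccatipred}, irrespective of the initial condition $P_{0|-1}=\var{\state_0}\ge 0$.

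Next I would push this convergence through the measurement-update step. The map $P\mapsto P - PC\transp(CPC\transp + \var{\vnoise})^{-1}CP$ is continuous on the cone of positive semidefinite matrices because $\var{\vnoise}>0$ guarantees that $CPC\transp + \var{\vnoise}$ is uniformly bounded below by $\var{\vnoise}$ and hence invertible with continuous inverse. Applying this map to $P_{t+1|t}\to P^*$ yields $P_{t+1|t+1}\to P^\circ$, the asymptotic filter covariance in \eqref{eq:Riccatifilter}. Since $\tr{\cdot}$ is continuous, the numerical sequence $\{\tr{P_{t|t}}\}$ converges, so it is bounded: choose $\validfrom\in\Nz$ so that $\bigl|\tr{P_{t|t}} - \tr{P^\circ}\bigr|\le 1$ for all $t\ge \validfrom$, and set $\partialhorror \Let \tr{P^\circ}+1$.

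For the Kalman gain observe that $\Kal_t = (\A P_{t|t}\A\transp + \var{\wnoise})\C\transp\bigl(\C(\A P_{t|t}\A\transp + \var{\wnoise})\C\transp + \var{\vnoise}\bigr)^{-1}$ is again a continuous function of $P_{t|t}$ on the positive semidefinite cone (the argument of the inverse is bounded below by $\var{\vnoise}>0$). Consequently $\Kal_t$ converges to the limit $\Kal^\circ$ obtained by substituting $P^\circ$ in the above formula, and therefore $\{\norm{\Kal_t}\}$ is a bounded sequence: after possibly enlarging $\validfrom$, one takes $\partialhorror_m \Let \norm{\Kal^\circ}+1$. The main delicacy is purely a matter of invoking and composing the right continuity/convergence facts; the only substantive input is Proposition \ref{prop:ARE}, for which the relevant structural hypotheses are exactly those listed in Assumption \ref{ass:stability1}\ref{ass:stability1:systemandnoise} together with $\var{\vnoise}>0$ from Assumption \ref{ass:noisematrices}. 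No genuine obstacle is expected here; the fact is essentially a packaging of the asymptotic behavior of the Kalman filter for use in the subsequent bound on $\Xi_t$ in Scholium \ref{sch:thehorror}.
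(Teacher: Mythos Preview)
Your argument is correct. Both the bound on $\tr{P_{t|t}}$ and the bound on $\norm{\Kal_t}$ follow cleanly from convergence of the Riccati recursion (Proposition~\ref{prop:ARE}) together with the continuity of the measurement-update map and of the gain map, continuity being guaranteed by $\var{\vnoise}>0$.

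The paper, however, takes a different route. Rather than invoking convergence of $P_{t+1|t}$ to $P^*$, it appeals to Lemma~\ref{prop:Pbounds} in Appendix~\ref{appendixB}, which uses controllability of $(\A,\var{\wnoise}^{1/2})$ and observability of $(\A,\C)$ to bound the relevant Gramians and then cites \cite[Lemma~7.1]{ref:Jaz-70} to obtain a uniform bound $P_{t|t}\le\rho' I$ valid for all $t\ge\max\{\kappa_1,\kappa_2\}$. The bound on $\norm{\Kal_t}$ is then derived by the crude estimate
\[
\norm{\Kal_t}\le\frac{\norm{\A P_{t|t}\A\transp+\var{\wnoise}}\norm{\C\transp}}{\lambda_{\min}(\var{\vnoise})}
\le\frac{\bigl(\norm{\var{\wnoise}}+\norm{\A}^2\norm{P_{t|t}}\bigr)\norm{\C\transp}}{\lambda_{\min}(\var{\vnoise})}.
\]
The trade-off is this: the paper's approach yields explicit constants and an explicit onset time $\validfrom=\max\{\kappa_1,\kappa_2\}$ in terms of the reachability and observability indices, which is convenient for the quantitative estimate of $\thehorror$ in Scholium~\ref{sch:thehorror}; your approach is lighter (it reuses Proposition~\ref{prop:ARE} already present in the paper and needs no separate appendix lemma) but delivers only existential bounds and an implicit $\validfrom$. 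Either is adequate for the purpose at hand, and the paper itself flags that alternative arguments exist.
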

\noindent Fact \ref{fact:factsonbounds} follows, for example, immediately from Lemma \ref{prop:Pbounds} in Appendix \ref{appendixB} and since by assumption $\Sigma_{\vnoise} > 0$, one possible bound on $\bigl(\norm{\Kal_t}\bigr)_{t\ge\validfrom}$ is given by $\norm{\Kal_t} \le \frac{\norm{\A \Pmat_{t|t} \A\transp + \Sigma_{\wnoise}}\norm{C\transp}}{\lambda_{\min}(\Sigma_{\vnoise})}$ $\le \frac{\bigl(\norm{\Sigma_{\wnoise}} + \norm{A}^2\norm{P_{t|t}}\bigr)\norm{C\transp}}{\lambda_{\min}(\Sigma_{\vnoise})}$. Note that there are many alternative bounds in the Literature, see, for example, \citep{ref:AndersonMooreFiltering,ref:Jaz-70}. Now, using the bounds in Fact \ref{fact:factsonbounds}, we can proceed to prove Scholium \ref{sch:thehorror}.

\begin{proof}[Proof of Scholium \ref{sch:thehorror}]
Recall the expression of $\Xi_{t}$ in \eqref{eq:theuglyterm} and define the following quantities:
\begin{equation}
\label{e:variousH}
\begin{aligned}
F^{\error}_{\kappa t} & \Let
\begin{bmatrix}
	\A^{\kappa-1} \Kal_{\kappa t} \C\A & \cdots & \A \Kal_{\kappa(t+1)-2}\C\A & \Kal_{\kappa(t+1)-1}\C\A
\end{bmatrix},\\
F^{\wnoise}_{\kappa t} & \Let
\begin{bmatrix}
	\A^{\kappa-1} \Kal_{\kappa t} \C & \cdots & \A \Kal_{\kappa(t+1)-2}\C & \Kal_{\kappa(t+1)-1}\C
\end{bmatrix},\\
F^{\vnoise}_{\kappa t} & \Let
\begin{bmatrix}
	\A^{\kappa-1} \Kal_{\kappa t}  & \cdots & \A \Kal_{\kappa(t+1)-2} & \Kal_{\kappa(t+1)-1}
\end{bmatrix}.
\end{aligned}
\end{equation}	
Then, $\Xi_{\kappa t}$ can be rewritten as
\begin{equation}\label{eq:thenotsouglyterm}
    \Xi_{\kappa t} = F^{\error}_{\kappa t} \error_{\kappa t:\kappa(t+1)-1} + F^{\wnoise}_{\kappa t}\wnoise_{\kappa t:\kappa(t+1)-1}+ F^{\vnoise}_{\kappa t}\vnoise_{\kappa t+1:\kappa(t+1)}.
\end{equation}
But for $t\ge\kjumpvalidfrom$, \footnote{Recall that for any positive real number $s$, $\lceil s\rceil$ denotes the smallest integer that upper-bounds $s$.} we have that
\begin{align*}
\norm{F^{\error}_{\kappa t}} & \le \kappa \norm{\C\A} \sup_{\ell\ge\kappa\kjumpvalidfrom}\norm{\Kal_\ell},\\
\norm{F^{\wnoise}_{\kappa t}} & \le \kappa \norm{\C} \sup_{\ell\ge\kappa\kjumpvalidfrom}\norm{\Kal_\ell},\\
\norm{F^{\vnoise}_{\kappa t}} & \le \kappa \sup_{\ell\ge\kappa\kjumpvalidfrom}\norm{\Kal_\ell}.
\end{align*}
Using Fact \ref{fact:factsonbounds} we know that $\sup_{\ell\ge\kappa\kjumpvalidfrom}\norm{\Kal_\ell} \le \partialhorror_h$. Thus, it suffices to take
\[
\thehorror = \kappa^{3/2}\partialhorror_h \Bigl(\norm{\C\A}\sqrt{\partialhorror} + \norm{C}\sqrt{\tr{\Sigma_\wnoise}} + \sqrt{\tr{\Sigma_\vnoise}}\Bigr)
\]
in order to upper-bound the expectation of $\Xi(t)$ in \eqref{eq:thenotsouglyterm} after time $T\Let\kappa\kjumpvalidfrom$.
\end{proof}

The following result pertains to mean-square boundedness of a random sequence $(\xi_t)_{t\in\Nz}$:
	\begin{proposition}[{\cite[Theorem~1]{ref:PemRos-99}}]
	\label{p:PR99}
	    Let $(\xi_t)_{t\in\Nz}$ be a sequence of nonnegative random variables on some probability space $(\Omega, \sigalg, \PP)$, and let $(\sigalg_t)_{t\in\Nz}$ be any filtration to which $(\xi_t)_{t\in\Nz}$ is adapted. Suppose that there exist constants $b > 0$, and $J, M < \infty$, such that
		\[
			\xi_0\le J,
		\]
		and for all $t\in\Nz$:
	    \begin{gather}
	        \EE\bigl[\xi_{t+1} - \xi_t\big|\sigalg_t\bigr] \le -b\quad \text{on the event }\{\xi_t > J\},\quad\text{and}\label{e:FLcond}\\
	        \EE\bigl[\abs{\xi_{t+1} - \xi_t}^4\big|\xi_0,\ldots, \xi_t\bigr] \le M.\label{e:pcond}
	    \end{gather}
	    Then there exists a constant $\gamma = \gamma(b, J, M) > 0$ such that $\displaystyle{\sup_{t\in\Nz}\EE\bigl[\xi_t^{2}\bigr] \le \gamma}$.
	\end{proposition}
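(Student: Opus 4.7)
The plan is to use a Foster--Lyapunov drift argument with the quadratic Lyapunov function $V(\xi)\Let\xi^{2}$. First I would expand the one-step conditional increment as
\begin{equation*}
  \EE\bigl[V(\xi_{t+1}) - V(\xi_{t}) \bigm| \sigalg_{t}\bigr]
  = \EE\bigl[(\xi_{t+1} - \xi_{t})^{2} \bigm| \sigalg_{t}\bigr]
  + 2\xi_{t}\, \EE\bigl[\xi_{t+1} - \xi_{t} \bigm| \sigalg_{t}\bigr].
\end{equation*}
By Cauchy--Schwarz and \eqref{e:pcond}, the first term is at most $\sqrt{M}$. For the second, hypothesis \eqref{e:FLcond} gives $\le -2b\xi_{t}$ on $\{\xi_{t}>J\}$, while on $\{\xi_{t}\le J\}$ Jensen applied to \eqref{e:pcond} yields $\bigl|\EE[\xi_{t+1}-\xi_{t}\mid\sigalg_{t}]\bigr|\le M^{1/4}$, so the second term is bounded in absolute value by $2JM^{1/4}$. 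Combining these observations, there is a constant $C=C(J,M)$ with
\begin{equation*}
  \EE\bigl[V(\xi_{t+1}) \bigm| \sigalg_{t}\bigr]
  \le V(\xi_{t}) + C - 2b\xi_{t}\,\indic{\{\xi_{t}>J\}}.
\end{equation*}

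Next, I would set $J' \Let \max\{J,(C+1)/b\}$ so that outside the ``safe set'' $R\Let\{\xi\le J'\}$ the drift of $V$ is at most $-b\xi_{t}\le -b\sqrt{V(\xi_{t})}$. I would then carry out an excursion analysis: define successive return times $\tau_{0}\Let 0$ and $\tau_{k+1}\Let\inf\{t>\tau_{k}:\xi_{t}\le J'\}$. Between consecutive visits to $R$, the supermartingale-type bound above together with optional stopping yields $\EE[\tau_{k+1}-\tau_{k}\mid\sigalg_{\tau_{k}}]\le V(\xi_{\tau_{k}})/b$. Writing the Doob decomposition $\xi_{t}=\xi_{0}+M_{t}+A_{t}$, the fourth-moment hypothesis \eqref{e:pcond} (via a Burkholder--Davis--Gundy or Marcinkiewicz--Zygmund estimate) then controls $\EE\bigl[\sup_{\tau_{k}\le s\le\tau_{k+1}} M_{s}^{2}\bigr]$ in terms of $\EE[\tau_{k+1}-\tau_{k}]\sqrt{M}$, which bounds the excursion maximum in $L^{2}$.

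Closing the argument amounts to assembling these ingredients into a uniform-in-$t$ bound on $\EE[\xi_{t}^{2}]$. At every return time $\xi_{\tau_{k}}\le J'$, so the $L^{2}$-increment bound gives $\EE[\xi_{\tau_{k}+1}^{2}\mid\sigalg_{\tau_{k}}]\le 2(J')^{2}+2\sqrt{M}$; combined with the excursion estimate this iterates to $\sup_{t\in\Nz}\EE[\xi_{t}^{2}]\le\gamma$, with $\gamma$ depending explicitly on $b$, $J$, and $M$ (the dependence on $\xi_{0}$ being absorbed since $\xi_{0}\le J$ implies $V(\xi_{0})\le J^{2}$).

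The main obstacle is that the negative drift of $V(\xi)=\xi^{2}$ is only linear in $\xi$ (i.e.\ of order $\sqrt{V}$) rather than proportional to $V$ itself, so a naive geometric contraction of the form $\EE[V(\xi_{t+1})]\le \alpha\EE[V(\xi_{t})]+\beta$ with $\alpha<1$ is \emph{not} available. This is precisely why the fourth-moment assumption \eqref{e:pcond} (rather than only a second-moment one) is indispensable: it ensures square-integrability of the martingale part of the Doob decomposition at a rate commensurate with the (weaker) negative drift, so that excursions above $R$ can be controlled in $L^{2}$. The technical core of the argument is the careful bookkeeping between the compensator $A_{t}$, the $L^{2}$-controlled martingale part $M_{t}$, and the renewal structure of successive returns to $R$.
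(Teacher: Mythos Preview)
The paper does not supply a proof of this proposition: it is quoted verbatim as \cite[Theorem~1]{ref:PemRos-99} and used as a black box in the proof of Theorem~\ref{thm:main}. There is therefore nothing in the paper to compare your argument against; the reference to Pemantle and Rosenthal is the ``proof''.

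That said, your outline is a reasonable sketch of how results of this type are established, and in particular it correctly identifies the central difficulty: with $V(\xi)=\xi^{2}$ the negative drift is only of order $\sqrt{V}$, so no geometric contraction is available and one must pass through an excursion/renewal decomposition together with a control on the martingale part that genuinely uses the fourth-moment bound~\eqref{e:pcond}. The first display and the choice of $J'$ are fine. Where the sketch becomes thin is the passage from ``$\EE[\tau_{k+1}-\tau_{k}\mid\sigalg_{\tau_{k}}]$ is bounded'' and ``the excursion maximum is bounded in $L^{2}$'' to a \emph{uniform-in-$t$} bound on $\EE[\xi_{t}^{2}]$: a fixed time $t$ typically falls strictly inside an excursion, and you need to bound $\EE[\xi_{t}^{2}]$ without knowing which excursion you are in or how long it has been running. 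The sentence ``combined with the excursion estimate this iterates to $\sup_{t}\EE[\xi_{t}^{2}]\le\gamma$'' hides exactly this step. In \citep{ref:PemRos-99} this is handled by a careful decomposition over the last exit from the safe set before time $t$ together with a union/summation argument; if you want to carry out your version you will need a comparable device (e.g., condition on the most recent return time $\tau_{k}\le t$ and control $\EE[\xi_{t}^{2}\indic{\{\tau_{k}\le t<\tau_{k+1}\}}]$ uniformly in $k$). Until that step is written out, the proposal is an outline rather than a proof.
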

\noindent

	\begin{lemma}
	\label{l:maintheorem:opt}
		Consider the system \eqref{eq:system-a}-\eqref{eq:system-b}, and suppose that Assumption \ref{ass:stability1} holds. Then the first assertion \ref{maintheorem:optproblem} of Theorem \ref{thm:main} holds.
	\end{lemma}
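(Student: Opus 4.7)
The plan has three components: (a) derive the explicit quadratic objective in the decision variables $(\olgain_t, \clgain_t)$; (b) verify convexity of both objective and constraints, and argue that the deterministic constraints \eqref{eq:constr:control-bounds}--\eqref{eq:constr:stab} are pointwise sufficient for the original stochastic constraints \eqref{eq:C1} and \eqref{eq:C4}; and (c) exhibit a feasible point at every $t \in \{0, N_c, 2N_c, \ldots\}$.

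For (a), I would substitute the lifted dynamics $\State_t = \Aa\state_t + \Ba\Inp_t + \Da\Wnoise_t$ and the policy \eqref{eq:liftedpolicy} into the cost \eqref{eq:totalcost2}, and then take the conditional expectation given $\Y_t$. The critical structural input is the observation, made immediately after \eqref{eq:innovations}, that $\Out_t - \hat\Out_t$ (and hence $\varphi(\Out_t - \hat\Out_t)$) is a measurable function of $(\error_t, \Wnoise_t, \Vnoise_t)$ only and is therefore conditionally independent of the decision pair $(\olgain_t, \clgain_t)$ given $\Y_t$. This factorization collapses every cross term produced by the expansion into a trace against one of the four matrices in \eqref{eq:Lambdas}; setting $\M_1 \Let \Wua + \Ba^\transp \Wxa \Ba$ and discarding additive terms independent of $(\olgain_t, \clgain_t)$ then reproduces the objective \eqref{eq:wICCs}.

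For (b), convexity of the objective follows by noting that the stripped form equals $\EE_{\Y_t}\!\bigl[\Inp_t^\transp \M_1 \Inp_t\bigr]$ plus terms affine in $(\olgain_t, \clgain_t)$; since $\Wua \succ 0$ and $\Wxa \succeq 0$ give $\M_1 \succeq 0$ and $\Inp_t$ is affine in $(\olgain_t, \clgain_t)$, the form is convex as the expectation of a convex function. Convexity of the constraints is immediate because \eqref{eq:constr:control-bounds} and \eqref{eq:constr:stab} are sums of absolute values, $\ell_1$-norms, and Euclidean norms of affine maps of the decision variables. To argue that they are pointwise sufficient for \eqref{eq:C1} and \eqref{eq:C4}, I would apply the triangle inequality together with the element-wise bound $|\varphi_{i,j}| \le \varphi_{\max}$: this yields $|(\Inp_t)_i| \le |(\olgain_t)_i| + \norm{(\clgain_t)_i}_1 \varphi_{\max}$ for the control bound, and for the stability constraint I would upper-bound the noise-dependent term via $\norm{\cdot}_2 \le \sqrt{n_2}\,\norm{\cdot}_\infty$ on $\R^{n_2}$ together with $|\varphi_{i,j}| \le \varphi_{\max}$.

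For (c), I would exhibit the candidate $\clgain_t = 0$ together with $(\olgain_t)_{1:\kappa m} = -(\thehorror + \tfrac{\eps}{2})\,\Reach_\kappa(\A_2, \B_2)^\dagger \A_2^\kappa \esttwo_t / \norm{\esttwo_t}$ whenever $\norm{\esttwo_t} \geq \thehorror + \eps$ (and the zero vector otherwise), with the remaining entries of $\olgain_t$ set to zero. Reachability of $(\A_2, \B_2)$ in $\kappa$ steps provides a right-inverse $\Reach_\kappa(\A_2, \B_2)^\dagger$; orthogonality of $\A_2$ yields $\norm{\A_2^\kappa \esttwo_t} = \norm{\esttwo_t}$, so the left-hand side of \eqref{eq:constr:stab} collapses to $\norm{\esttwo_t} - (\thehorror + \tfrac{\eps}{2})$; and the $\ell_\infty$ norm of the candidate is at most its Euclidean norm $(\thehorror + \tfrac{\eps}{2})/\sigmin(\Reach_\kappa(\A_2, \B_2)) = U_{\max}^*$, which is $\le U_{\max}$ by Assumption \ref{ass:stability1}(i). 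The hard part will be the bookkeeping in (a) --- making sure every cross term in the expansion of $V_t$ is correctly collected into one of the $\Lambda$ matrices using the conditional-independence structure, and verifying that the constants matching \eqref{eq:wICCs} come out exactly as written; once that algebra is in hand, convexity and feasibility follow essentially by construction from the policy class and the standing assumptions.
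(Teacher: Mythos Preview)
Your proposal is correct and follows essentially the same route as the paper: expand the cost using the lifted dynamics and the policy, collect cross terms into the $\Lambda$ matrices, argue convexity via affinity of $\Inp_t$ in $(\olgain_t,\clgain_t)$, bound the stochastic constraints by the deterministic ones via the triangle inequality and $\norm{\cdot}_2\le\sqrt{n_2}\norm{\cdot}_\infty$, and exhibit feasibility with $\clgain_t=0$ and a pseudoinverse-based open-loop term. The only cosmetic difference is that the paper writes the feasibility candidate as $-\Reach_\kappa(\A_2,\B_2)^\dagger\sat_r(\A_2^\kappa\esttwo_t)$ with $r=\thehorror+\tfrac{\eps}{2}$, which coincides with your normalized expression on the event $\{\norm{\esttwo_t}\ge\thehorror+\eps\}$; and the paper claims \emph{equivalence} (not just sufficiency) of \eqref{eq:constr:control-bounds} with \eqref{eq:C1} by citing earlier work, whereas your sufficiency argument already suffices for the ``approximation'' direction stated in the theorem.
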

	\begin{proof}
		It is clear that $\State_t\transp \Wxa \State_t + \Inp_t\transp \Wua \Inp_t$ is convex quadratic in $\State_t$ and $\Inp_t$, and both $\State_t$ and $\Inp_t$ are affine functions of the design parameters $(\olgain_t, \clgain_t)$ for every realization of the noise $(\wnoise_t)_{t\in\Nz}$. Since taking expectation of a convex function retains convexity \citep{ref:BoyVan-04}, we conclude that $\EE_{\state_t}\bigl[\State_t\transp \Wxa \State_t + \Inp_t\transp \Wua \Inp_t\bigr]$ is convex in $(\olgain_t, \clgain_t)$. Also, note that the constraint sets described by \eqref{eq:constr:control-bounds} and \eqref{eq:constr:stab} are convex in $(\olgain_t, \clgain_t)$. This settles the claim concerning convexity of the optimization program in Theorem \ref{thm:main}-\ref{maintheorem:optproblem}.

		Concerning the objective function \eqref{eq:wICCs}, we have that
		\begin{align*}
			& \EE_{\Y_t}\bigl[\State_t\transp \Wxa \State_t + \Inp_t\transp \Wua \Inp_t\bigr]\\
			&\; = \EE_{\Y_t}\bigl[\bigl(\Aa\state_t + \Ba\Inp_t + \Da\Wnoise_t\bigr)\transp \Wxa \bigl(\Aa\state_t + \Ba\Inp_t + \Da\Wnoise_t\bigr) + \Inp_t\transp \Wua \Inp_t\bigr]\\
			&\; = \EE_{\Y_t}\Bigl[\bigl\|\Aa\state_t + \Ba\bigl(\olgain_t + \clgain_t\varphi(\Out_t - \hat\Out_t)\bigr) + \Da\Wnoise_t\bigr\|_{\Wxa}^2 + \bigl\|\olgain_t + \clgain_t\varphi(\Out_t - \hat\Out_t)\bigr\|_{\Wua}^2\Bigr]\\
			&\; = \EE_{\Y_t}\Bigl[\state_t\transp\Aa\transp\Wxa\Aa\state_t + 2\state_t\transp \Aa\transp\Wxa \Ba \olgain_t + 2 \state_t\transp\Aa\transp\Wxa \Ba \clgain_t\varphi(\Out_t - \hat\Out_t) + 2 \state_t\transp\Aa\transp\Wxa\Da \Wnoise_t\\
			& \qquad + \olgain_t\transp(\Ba\transp\Wxa\Ba + \Wua)\olgain_t + 2\olgain_t\transp(\Ba\transp\Wxa\Ba + \Wua)\clgain_t\varphi(\Out_t - \hat\Out_t) + 2\olgain_t\transp\Ba\transp\Wxa\Da\Wnoise_t\\
			& \qquad + \varphi(\Out_t - \hat\Out_t)\transp\clgain_t\transp(\Ba\transp \Wxa \Ba + \Wua)\clgain_t\varphi(\Out_t - \hat\Out_t) + 2\varphi(\Out_t - \hat\Out_t)\transp\clgain_t\transp\Ba\transp\Wxa\Da\Wnoise_t\\
			& \qquad + \Wnoise_t\transp\Da\transp\Wxa\Da\Wnoise_t\Bigr]
		\end{align*}
		Since $\EE_{\{\Y_t, \state_t\}}\bigl[\Wnoise_t\bigr] = 0$ and
		in view of the definitions of the various matrices in \eqref{eq:Lambdas} we have
		\begin{align*}
			V_t & = 2 \hat x_t\transp \Aa\transp \Wxa \Ba\olgain_t + 2 \tr{\Aa\transp \Wxa \Ba \clgain_t\LambdaPhiX_t}+ 2\tr{\clgain_t\transp  \Ba\transp \Wxa \Da \LambdaWPhi_t}\\
			& \quad + \olgain_t\transp \M_1 \olgain_t + 2\olgain_t\transp \M_1\clgain_t\LambdaPhi_t + \tr{\clgain_t\transp \M_1\clgain_t\LambdaPhiPhi_t}\\
			& \quad + \text{terms that do not depend on }(\olgain_t, \clgain_t).
		\end{align*}
This tallies the objective function in \eqref{eq:wICCs} with the objective $V_t$ in \eqref{eq:totalcost2}.

Concerning the constraints, we have shown in \citep{ref:HokChaLyg-09,ref:ChaHokLyg-09} that combining the constraint $\norm{\inp_t}_\infty \le U_{\max}$ and the class of policies \eqref{eq:liftedpolicy} is equivalent to the constraints $\abs{(\olgain_t)_i} + \norm{(\clgain_t)_i}_1\varphi_{\max} \le U_{\max}$ for all $i = 1, \ldots, Nm$, which accounts for the constraint \eqref{eq:constr:control-bounds}. Substituting \eqref{eq:liftedpolicy} into the stability constraint \eqref{eq:C4}, we obtain
\begin{align*}
& \norm{\A_{2}^\kappa\esttwo_{t}+\Reach_\kappa(\A_{2},\B_{2})(\olgain_t)_{1:\kappa m}+\Reach_\kappa(\A_{2},\B_{2})(\clgain_t)_{1:\kappa m}\varphi(\Wnoise)}\\
&\qquad \leq \norm{\A_{2}^\kappa\esttwo_{t}+\Reach_\kappa(\A_{2},\B_{2})(\olgain_t)_{1:\kappa m}}+ \norm{\Reach_\kappa(\A_{2},\B_{2})(\clgain_t)_{1:\kappa m}\varphi(\Wnoise)}\\
&\qquad \leq \norm{\A_{2}^\kappa\esttwo_{t}+\Reach_\kappa(\A_{2},\B_{2})(\olgain_t)_{1:\kappa m}}+ \sqrt{n_2}\norm{\Reach_\kappa(\A_{2},\B_{2})(\clgain_t)_{1:\kappa m}\varphi(\Wnoise)}_\infty\\
&\qquad \leq \norm{\A_{2}^\kappa\esttwo_{t}+\Reach_\kappa(\A_{2},\B_{2})(\olgain_t)_{1:\kappa m}}+ \sqrt{n_2}\norm{\Reach_\kappa(\A_{2},\B_{2})(\clgain_t)_{1:\kappa m}}_\infty\varphi_{\max}.
\end{align*}
Accordingly, if condition constraint  \eqref{eq:constr:stab} is satisfied, then the stability constraint \eqref{eq:C4} is satisfied as well.

It remains to show that the constraints are simultaneously feasible. Inspired by the work in \citep{ref:RamChaMilHokLyg-09}, we consider the candidate controller
\begin{equation*}
   \tilde u_{t:t+\kappa-1}\Let \smat{\eta_t\\ \vdots \\ \eta_{t+\kappa-1}}\Let -\Reach_\kappa(\A_2,\B_2)^\dagger \sat_r(\A_2^\kappa\hat\state_t^{(2)}),
\end{equation*}
i.e., with $\clgain_t\equiv 0$, where $r\Let \thehorror+\eps/2$. First, we have that
\[\norm{\tilde u_{t:t+\kappa-1}}_\infty\leq \norm{\tilde u_{t:t+\kappa-1}}_2\leq \sigma_{\min}(\Reach_\kappa(\A_2,\B_2))^{-1}(\thehorror+\eps/2)= U_{\max}^*,\]
and the constraint \eqref{eq:C1} is satisfied. Concerning constraint \eqref{eq:C4}, we have that
\begin{align}
 \norm{\A_2^\kappa\hat\state_t^{(2)}+\Reach_\kappa(\A_2,\B_2)\tilde u_{t:t+\kappa-1}}  &= \norm{\hat\state_t^{(2)}} -r\nn\\
 &=\norm{\hat\state_t^{(2)}} -(\thehorror+\eps/2), \ \ \text{whenever }\bigl\|\esttwo_{t}\bigr\|\geq \thehorror +\eps, \label{eq:feasibilitybound}
\end{align}
where the first equality follows from the orthogonality of $\A_2$ (see \citep{ref:RamChaMilHokLyg-09}),
and the constraint \eqref{eq:constr:stab} is also satisfied.
The optimization program \eqref{eq:wICCs} subject to \eqref{eq:constr:control-bounds}-\eqref{eq:constr:stab} is therefore a quadratic program that is equivalent to \eqref{eq:problem1p}.
	\end{proof}

	\begin{lemma}
	\label{l:PRcondition1}
		Consider the system \eqref{eq:system-a}-\eqref{eq:system-b}, and suppose that Assumption \ref{ass:stability1} holds. Then there exist constants $b, J > 0$ such that\footnote{For $z\in\R$ the notation $\ceil z$ stands for the smallest integer greater or equal to $z$.}
		\[
			\EE_{\Y_{\kappa t}}\Bigl[\bigl\|\esttwo_{\kappa (t+1)}\bigr\| - \bigl\|\esttwo_{\kappa t}\bigr\|\Bigr] \le -b \quad\text{on the set }\bigl\{\bigl\|\esttwo_{\kappa t}\bigr\| > J\bigr\}\qquad\text{for all }t\ge\kjumpvalidfrom.
		\]
	\end{lemma}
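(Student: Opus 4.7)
The plan is to verify the Foster-Lyapunov drift condition by iterating the estimator recursion \eqref{e:fundamental} for $\kappa$ steps and invoking the stability constraint \eqref{eq:C4} together with the tail bound from Scholium \ref{sch:thehorror}. First, I would unroll \eqref{e:fundamental} over $\kappa$ steps and restrict to the orthogonal subsystem coordinates to write
\[
	\esttwo_{\kappa(t+1)} = \A_2^\kappa \esttwo_{\kappa t} + \Reach_\kappa(\A_2,\B_2)\smat{\inp_{\kappa t}\\ \vdots \\ \inp_{\kappa(t+1)-1}} + \Xi_{\kappa t}^{(2)},
\]
where $\Xi_{\kappa t}^{(2)}$ is the block of $\Xi_{\kappa t}$ from \eqref{eq:theuglyterm} projected onto the second Jordan component; in particular $\norm{\Xi_{\kappa t}^{(2)}}\le\norm{\Xi_{\kappa t}}$.

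Next, a triangle inequality gives
\[
	\bigl\|\esttwo_{\kappa(t+1)}\bigr\| \le \Bigl\| \A_2^\kappa \esttwo_{\kappa t} + \Reach_\kappa(\A_2,\B_2)\smat{\inp_{\kappa t}\\ \vdots \\ \inp_{\kappa(t+1)-1}} \Bigr\| + \bigl\|\Xi_{\kappa t}^{(2)}\bigr\|.
\]
Because the optimization problem \eqref{eq:problem1p} is feasible (Lemma \ref{l:maintheorem:opt}), the applied control sequence at time $\kappa t$ satisfies the stability constraint \eqref{eq:C4}; hence on the event $\bigl\{\bigl\|\esttwo_{\kappa t}\bigr\| \ge \thehorror+\eps\bigr\}$ the first term on the right is bounded by $\bigl\|\esttwo_{\kappa t}\bigr\| - (\thehorror + \eps/2)$.

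Taking conditional expectation given $\Y_{\kappa t}$ and using Scholium \ref{sch:thehorror}, which applies for $\kappa t\ge T$, i.e., for $t\ge\kjumpvalidfrom$, yields
\[
	\EE_{\Y_{\kappa t}}\bigl[\bigl\|\esttwo_{\kappa(t+1)}\bigr\|\bigr] \le \bigl\|\esttwo_{\kappa t}\bigr\| - \bigl(\thehorror + \tfrac{\eps}{2}\bigr) + \thehorror = \bigl\|\esttwo_{\kappa t}\bigr\| - \tfrac{\eps}{2},
\]
on the event $\bigl\{\bigl\|\esttwo_{\kappa t}\bigr\| \ge \thehorror+\eps\bigr\}$. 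The claim then follows by taking $b \Let \eps/2$ and $J \Let \thehorror + \eps$.

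The main obstacle is ensuring that the stability constraint \eqref{eq:C4} is actually in force for the controls $\inp_{\kappa t},\ldots,\inp_{\kappa(t+1)-1}$ effectively applied at each $\kappa$-step grid-point with $t\ge\kjumpvalidfrom$. Since re-optimization occurs only every $N_c$ steps and $N_c\ge\kappa$, the constraint is imposed on the first $\kappa$ applied controls of each horizon; if $N_c>\kappa$, an additional argument exploiting the orthogonality of $\A_2$ (so that uncontrolled propagation over the remaining $N_c-\kappa$ steps preserves the norm up to the $\Xi$ perturbation) is needed so that the one-step $\kappa$-grid drift inequality is valid at every index $t\ge\kjumpvalidfrom$, and not only at the sub-grid of re-optimization times.
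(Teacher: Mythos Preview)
Your proposal is correct and follows essentially the same route as the paper: unroll the estimator recursion over $\kappa$ steps to isolate the $\Xi$ contribution, apply the triangle inequality, bound the deterministic part via the stability constraint \eqref{eq:C4} and the stochastic part via Scholium~\ref{sch:thehorror}, and conclude with $b=\eps/2$, $J=\thehorror+\eps$. The paper's argument is in fact no more explicit than yours on the point you flag in your last paragraph: it simply invokes \eqref{eq:feasibilitybound} without discussing the possible mismatch between the $\kappa$-grid and the $N_c$-grid when $N_c>\kappa$, so the concern you raise is a valid observation about the paper itself rather than a gap in your reproduction of its proof.
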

	\begin{proof}
	Let $\Pi^{(2)}$  be a projection operator that picks the last $n_2$ components of any vector in $\R^n$, and	 consider the subsampled process $(\esttwo_{t\kappa})_{t\in\Nz}$ given by
\begin{equation}
\label{e:subsampledequation}
	\esttwo_{\kappa (t+1)}  = \A_2^\kappa \esttwo_{\kappa t} + \Reach_\kappa(\A_2, \B_2)\inp_{\kappa t:\kappa(t+1)-1}
		 + \Pi^{(2)}\bigl(\Xi_{\kappa t}\bigr).
\end{equation}
By utilizing the triangle inequality we get
		\begin{align*}
			& \EE_{\Y_{\kappa t}}\Bigl[\norm{\esttwo_{\kappa(t+1)}} - \norm{\esttwo_{\kappa t}}\Bigr]\\
			& \; \le \EE_{\Y_{\kappa t}}\Bigl[\Bigl\|\A_2^\kappa \esttwo_{\kappa t} + \Reach_\kappa(\A_2, \B_2)\inp_{\kappa t:\kappa(t+1)-1}\Bigr\| -  \norm{\esttwo_{\kappa t}}\Bigr] +
\EE_{\Y_{\kappa t}}\Bigl[\norm{\Pi^{(2)}\bigl(\Xi_{\kappa t}\bigr)}\Bigr].
		\end{align*}
		We know from Scholium \ref{sch:thehorror} that there exists  a uniform (with respect to time $t$) upper bound $\thehorror$ for the last term on the right-hand side of the preceding inequality for $t\ge\kappa\kjumpvalidfrom$. We rewrite the inequality as
		\begin{align*}
			\EE_{\Y_{\kappa t}}\Bigl[\norm{\esttwo_{\kappa(t+1)}} - \norm{\esttwo_{\kappa t}}\Bigr] &\le \EE_{\Y_{\kappa t}}\Bigl[\norm{\A_2^\kappa \esttwo_{\kappa t} + \Reach_\kappa(\A_2, \B_2)\inp_{\kappa t:\kappa(t+1)-1}} - \norm{\esttwo_{\kappa t}}\Bigr] + \thehorror\\
 & \leq - \frac{\eps}{2},\qquad \text{whenever }\bigl\|\esttwo_{t}\bigr\|\geq \thehorror +\eps,
		\end{align*}
where the last inequality follows from \eqref{eq:feasibilitybound}. Setting $b=\eps/2$ and $J=\thehorror+\eps$ completes the proof.
	\end{proof}

	\begin{lemma}
	\label{l:PRcondition2}
		Consider the system \eqref{eq:system-a}-\eqref{eq:system-b}, and suppose that Assumption \ref{ass:stability1} holds. Then there exists a constant $M > 0$ such that
		\[
			\EE\Bigl[\abs{\norm{\esttwo_{\kappa(t+1)}} - \norm{\esttwo_{\kappa t}}}^4\,\Big|\,\norm{\esttwo_{\kappa i}},\;i = \kjumpvalidfrom, \ldots, t\Bigr] \le M\qquad \text{for all }t\ge\kjumpvalidfrom.
		\]
	\end{lemma}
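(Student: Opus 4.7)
The plan is to exploit the orthogonality of $\A_2^\kappa$ to cancel the dominant $\esttwo_{\kappa t}$ contribution from the norm difference, and then to bound what remains by a deterministic constant plus the additive disturbance $\Xi_{\kappa t}$, whose fourth conditional moment I shall control uniformly. Starting from the subsampled recursion \eqref{e:subsampledequation} together with the reverse triangle inequality and the identity $\norm{\A_2^\kappa \esttwo_{\kappa t}} = \norm{\esttwo_{\kappa t}}$, I obtain
\[
\Bigl|\norm{\esttwo_{\kappa(t+1)}} - \norm{\esttwo_{\kappa t}}\Bigr|
= \Bigl|\norm{\esttwo_{\kappa(t+1)}} - \norm{\A_2^\kappa \esttwo_{\kappa t}}\Bigr|
\le \norm{\Reach_\kappa(\A_2,\B_2)}\,\norm{\inp_{\kappa t:\kappa(t+1)-1}} + \norm{\Xi_{\kappa t}}.
\]
By the hard bound \eqref{eq:C1}, the first summand is uniformly bounded by $c_1 \Let \sqrt{\kappa m}\,\norm{\Reach_\kappa(\A_2,\B_2)}\,U_{\max}$, so the fourth power is pointwise at most $8(c_1^4 + \norm{\Xi_{\kappa t}}^4)$ via the elementary inequality $(a+b)^4 \le 8(a^4 + b^4)$. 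The task thus reduces to a uniform bound on $\EE\bigl[\norm{\Xi_{\kappa t}}^4 \,\big|\, \Y_{\kappa t}\bigr]$.

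For that bound I would start from the decomposition \eqref{eq:thenotsouglyterm} of $\Xi_{\kappa t}$ and use Fact \ref{fact:factsonbounds} together with the explicit expressions \eqref{e:variousH} to obtain uniform (in $t \ge \kjumpvalidfrom$) operator-norm bounds on each of $F^{\error}_{\kappa t}$, $F^{\wnoise}_{\kappa t}$, $F^{\vnoise}_{\kappa t}$. The process and measurement noise blocks are Gaussian with fixed covariances, so their fourth moments are absolute constants. For the error block, Proposition \ref{prop:1} says that conditionally on $\Y_{\kappa t + j}$ the error $\error_{\kappa t + j}$ is Gaussian with zero mean and covariance $P_{\kappa t + j\mid \kappa t + j}$, whose trace is uniformly bounded by $\partialhorror$ for $t\ge\kjumpvalidfrom$ (again Fact \ref{fact:factsonbounds}). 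This yields $\EE\bigl[\norm{\error_{\kappa t + j}}^4 \mid \Y_{\kappa t + j}\bigr] \le 3\partialhorror^2$, and the tower property descends the bound to conditional expectations with respect to $\Y_{\kappa t}$. Expanding $\norm{\Xi_{\kappa t}}^4$ into finitely many inner-product quadruples and bounding cross terms by Cauchy--Schwarz and the mutual independence of the noise blocks and the initial error, I would obtain a uniform constant $c_2$ with $\EE\bigl[\norm{\Xi_{\kappa t}}^4 \mid \Y_{\kappa t}\bigr] \le c_2$.

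To conclude, since each $\esttwo_{\kappa i}$ is $\Y_{\kappa i}$-measurable, the $\sigma$-algebra generated by $\bigl\{\norm{\esttwo_{\kappa i}} : i = \kjumpvalidfrom, \ldots, t\bigr\}$ sits inside $\Y_{\kappa t}$, and a final application of the tower property delivers the statement of the lemma with $M \Let 8(c_1^4 + c_2)$. The main obstacle is the fourth-moment estimate on $\Xi_{\kappa t}$: one cannot appeal to unconditional Gaussianity of $\error_t$, since the inputs depend nonlinearly on $\Y_t$ through the saturations $\varphi$, so every use of Gaussianity must be conditional on the appropriate $\Y_{\kappa t + j}$ and then passed down to $\Y_{\kappa t}$; one also has to track the fourth moments of the errors across $j = 0, \ldots, \kappa-1$ innovations, for which the uniform covariance bound from Fact \ref{fact:factsonbounds} is the right tool.
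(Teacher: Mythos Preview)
Your proposal is correct and follows the paper's proof closely: orthogonality of $\A_2^\kappa$ to replace $\norm{\esttwo_{\kappa t}}$ by $\norm{\A_2^\kappa \esttwo_{\kappa t}}$, reverse triangle inequality, the bound $(a+b)^4\le 8(a^4+b^4)$, and then a uniform fourth-moment bound on $\Xi_{\kappa t}$.

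The only substantive difference is in how you treat the fourth moment of $\Xi_{\kappa t}$. The paper dispatches this in one line by asserting that $\Xi_{\kappa t}$ is Gaussian and independent of $\bigl\{\norm{\esttwo_{\kappa i}}:\;i\le t\bigr\}$. That claim is in fact correct: the error recursion $e_{t+1}=\Phi_t e_t+\Gamma_t w_t-K_t v_{t+1}$ does not involve the control input at all (the $Bu_t$ term cancels when subtracting \eqref{e:fundamental} from \eqref{eq:system-a}), and the gains $K_t,\Phi_t,\Gamma_t$ are deterministic. Hence $(e_t)_{t\in\Nz}$ is a linear image of the jointly Gaussian data $(x_0,(w_s),(v_s))$, so $\Xi_{\kappa t}$ is unconditionally Gaussian; moreover, since $e_{\kappa t}\mid\Y_{\kappa t}\sim\mathcal N(0,P_{\kappa t|\kappa t})$ with deterministic covariance, $e_{\kappa t}$ is independent of $\Y_{\kappa t}$, and the future noises obviously are as well, giving independence from the conditioning set. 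Your remark that ``one cannot appeal to unconditional Gaussianity of $e_t$'' is therefore overly cautious: the nonlinearity lives entirely in $\hat x_t$, not in $e_t$. Your alternative route via conditional Gaussianity at each $\Y_{\kappa t+j}$, Fact \ref{fact:factsonbounds}, and the tower property is valid but more laborious; note also that the ``mutual independence of the noise blocks and the initial error'' you invoke is not quite right (the later errors $e_{\kappa t+j}$ depend on $w_{\kappa t},\ldots,w_{\kappa t+j-1}$), though Cauchy--Schwarz alone already suffices there.
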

	\begin{proof}
		Fix $t\ge\kjumpvalidfrom$, and consider again the subsampled process in \eqref{e:subsampledequation}. We have, by orthogonality of $\A_2$, that $\norm{\esttwo_{\kappa t}} = \norm{\A_2^\kappa\esttwo_{\kappa t}}$ and, by the triangle inequality, that
		\begin{align*}
			& \EE\Bigl[\abs{\norm{\esttwo_{\kappa(t+1)}} - \norm{\esttwo_{\kappa t}}}^4\,\Big|\,\norm{\esttwo_{\kappa i}},\;i = \kjumpvalidfrom, \ldots, t\Bigr]\nn\\
			& = \EE\Bigl[\abs{\norm{\esttwo_{\kappa(t+1)}} - \norm{\A_2^\kappa \esttwo_{\kappa t}}}^4\,\Big|\,\norm{\esttwo_{\kappa i}},\;i = \kjumpvalidfrom, \ldots, t\Bigr]\nn\\
			& \le \EE\Bigl[\norm{\A_2^\kappa \esttwo_{\kappa t} + \Reach_\kappa(\A_2, \B_2)\inp_{\kappa t:\kappa(t+1)-1} + \Xi_{\kappa t} - \A_2^\kappa \esttwo_{\kappa t}}^4\,\Big|\,\norm{\esttwo_{\kappa i}},\;i = \kjumpvalidfrom, \ldots, t\Bigr]\nn\\
			& = \EE\Bigl[\Bigl(\norm{\Reach_\kappa(\A_2, \B_2)\inp_{\kappa t:\kappa(t+1)-1}} + \norm{\Xi_{\kappa t}}\Bigr)^4\,\Big|\,\norm{\esttwo_{\kappa i}},\;i = \kjumpvalidfrom, \ldots, t\Bigr].\label{e:fourthmomentbnd}
		\end{align*}
Recall that for any two positive numbers $a$ and $b$, $(a+b)^2\leq 2a^2+2b^2$. Using this latter fact, we arrive at the following upper bound
		\begin{align*}
			& \EE\Bigl[\abs{\norm{\esttwo_{\kappa(t+1)}} - \norm{\esttwo_{\kappa t}}}^4\,\Big|\,\norm{\esttwo_{\kappa i}},\;i = \kjumpvalidfrom, \ldots, t\Bigr]\nn\\
			& \leq 8 \EE\Bigl[\norm{\Reach_\kappa(\A_2, \B_2)\inp_{\kappa t:\kappa(t+1)-1}}^4+\norm{\Xi_{\kappa t}}^4\,\Big|\,\norm{\esttwo_{\kappa i}},\;i = \kjumpvalidfrom, \ldots, t\Bigr].
		\end{align*}
		By design $\norm{u_i}_\infty \le U_{\max}$ and $\Xi_{\kappa t}$ is Gaussian and independent of $\Bigl\{\norm{\esttwo_{\kappa i}},\;i = \kjumpvalidfrom, \ldots, t\Bigr\}$ and has its fourth moment bounded. Therefore, we can easily infer that there exists an $M>0$ such that
		\[
			\EE\Bigl[\Bigl(\norm{\Reach_\kappa(\A_2, \B_2)\inp_{\kappa t:\kappa(t+1)-1}} + \norm{\Xi_{\kappa t}}\Bigr)^4\,\Big|\,\norm{\esttwo_{\kappa i}},\;i = \kjumpvalidfrom, \ldots, t\Bigr] \le M
		\]
		for all $t\ge\kjumpvalidfrom$.
	\end{proof}

	We are finally ready to prove Theorem \ref{thm:main}.

	\begin{proof}[Proof of Theorem \ref{thm:main}]
		Claim  \ref{maintheorem:optproblem} of Theorem \ref{thm:main} was proved in Lemma \ref{l:maintheorem:opt}. It remains to show claim \ref{maintheorem:msbdd}. We start by asserting the following inequality
\begin{equation*}
			\EE_{\Y_{\kappa\kjumpvalidfrom}}\bigl[\norm{\state_t}^2\bigr] \le 2 \EE_{\Y_{\kappa\kjumpvalidfrom}}\bigl[\norm{\state_t - \hat\state_{t}}^2\bigr] + 2\EE_{\Y_{\kappa\kjumpvalidfrom}}\bigl[\norm{\hat\state_{t}}^2\bigr].
\end{equation*}
We know from Fact \ref{fact:factsonbounds} that $\EE_{\Y_{\kappa\kjumpvalidfrom}}\bigl[\norm{\state_t - \hat\state_t}^2\bigr] \le \partialhorror$ for all $t \ge \kappa\kjumpvalidfrom$. As such, if we are able to show that the state of the estimator is mean-square bounded, we can immediately infer a mean-square bound on the state of the plant.

We first start by splitting the squared norm of the estimator state as $\norm{\hat\state_t}^2 = \bigl\|\hat\state_t^{(1)}\bigr\|^2 + \bigl\|\hat\state_t^{(2)}\bigr\|^2$, where $\hat\state^{(1)}$ and $\hat\state^{(2)}$ are states corresponding to the Schur and orthogonal parts of the system, respectively. It then follows that
		\begin{equation}
		\label{e:estdecomposition}
			\EE_{\Y_{\kappa\kjumpvalidfrom}}\bigl[\norm{\hat\state_t}^2\bigr] = \EE_{\Y_{\kappa\kjumpvalidfrom}}\bigl[\bigl\|\hat\state_t^{(1)}\bigr\|^2\bigr] +\EE_{\Y_{\kappa\kjumpvalidfrom}}\bigl[ \bigl\|\hat\state_t^{(2)}\bigr\|^2\bigr],\qquad t\in\Nz.
		\end{equation}
Letting $\xi_t \Let \bigl\|\esttwo_t\bigr\|$ for $t\in\Nz$, we see from Lemma \ref{l:PRcondition1} and Lemma \ref{l:PRcondition2} the conditions  \eqref{e:FLcond} and \eqref{e:pcond} of Proposition \ref{p:PR99} are verified for the sequence $(\xi_{\kappa t})_{t\ge\kjumpvalidfrom}$. Thus, by Proposition \ref{p:PR99}, there exists a constant $\gamma^{(2)} = \gamma^{(2)}(b, J, M) > 0$ such that
		\[
			\EE_{\Y_{\kappa\kjumpvalidfrom}}\bigl[\bigl\|\esttwo_{\kappa t}\bigr\|^2\bigr] \le \gamma^{(2)}\qquad \text{for all }t\ge\kjumpvalidfrom.
		\]
Hence, the orthogonal subsystem is mean-square bounded.

Since the matrix $\A_1$ is Schur stable, we know \citep[Proposition 11.10.5]{ref:Ber-09} that there exists a positive definite matrix $\Pmatone\in\R^{n_1\times n_1}$ that satisfies $\A_1\transp \Pmatone \A_1 - \Pmatone = - \Qmatone$. It easily follows that there exists $\rho\in\;]0, 1[$ such that $\A_1\transp \Pmatone \A_1 - \Pmatone \le -\rho \Pmatone$; in fact, $\rho$ can be chosen from the interval $\bigl]0, \min\bigl\{1, \lambda_{\min}\bigl(\Qmatone\bigr)/\lambda_{\max}\bigl(\Pmatone\bigr)\bigr\}\bigr[$. Therefore,  we see that for any $t\ge\validfrom$,
		\begin{align*}
			\EE_{\Y_t}& \left[\norm{\estone_{t+1}}^2_{\Pmatone}\right] - \norm{\estone_{t}}^2_{\Pmatone} \le  - \rho\norm{\estone_{t}}^2_{\Pmatone} + 2\EE_{\Y_t}\bigl[(\estone_t)\transp\A_1\transp\Pmatone\B_1\inpone_t\bigr] + \EE_{\Y_t}\bigl[\Pi^{(1)}(\Xi_t)\bigr],
		\end{align*}
		where $\Pi^{(1)}$ is the projection onto the first $n_1$ coordinates and $\Xi_t$ was defined in \eqref{eq:theuglyterm}. Young's inequality shows that $2\EE_{\Y_t}\bigl[(\estone_t)\transp\A_1\transp\Pmatone\B_1\inpone_t\bigr] \le \epsilon\EE_{\Y_t}\bigl[\bigl\|\A_1\estone_t\bigr\|_{\Pmatone}^2\bigr] + \frac{1}{\epsilon}\EE_{\Y_t}\bigl[\bigl\|\B_1\inpone_t\bigr\|_{\Pmatone}^2\bigr]$ for $\eps > 0$. Choosing $\epsilon = 3\rho/(2(1-\rho))$ and utilizing Fact \ref{fact:factsonbounds} and the upper bound on the inputs $U_{\max}$, it follows that for all $t\ge\validfrom$,
		\begin{align*}
			\EE_{\Y_t} & \left[\norm{\estone_{t+1}}^2_{\Pmatone}\right] - (1-\tfrac{\rho}{2})\norm{\estone_{t}}^2_{\Pmatone} \le \frac{2(1-\rho)}{3\rho}\lambda_{\max}\bigl(\Pmatone\bigr)\norm{\B_1}^2 U_{\max}^2 n_1+\thehorror=:c.
		\end{align*}
		Therefore, for $t\ge\validfrom$, we have that
		\begin{align*}
			\EE_{\Y_{\kappa\kjumpvalidfrom}}  \left[\norm{\estone_{\kappa\kjumpvalidfrom+2}}^2_{\Pmatone}\right]& = \EE_{\Y_{\kappa\validfrom}}\left[\EE_{\Y_{\kappa\kjumpvalidfrom+1}}\left[\norm{\estone_{\kappa\kjumpvalidfrom+2}}^2_{\Pmatone}\right]\right] \\
			& \le \EE_{\Y_{\kappa\kjumpvalidfrom}}\left[\EE_{\Y_{\kappa\kjumpvalidfrom+1}}\left[(1-\tfrac{\rho}{2})\norm{\estone_{\kappa\kjumpvalidfrom+1}}^2_{\Pmatone} + c\right]\right]\\
			& \le (1-\tfrac{\rho}{2})^2 \norm{\estone_{\kappa\kjumpvalidfrom}}^2_{\Pmatone} + \bigl(1 + (1-\tfrac{\rho}{2})\bigr)c.
		\end{align*}
		Iterating the last inequality, it follows that
		\[
			\EE_{\Y_{\kappa\kjumpvalidfrom}}\left[\norm{\estone_{t+\kappa\kjumpvalidfrom}}^2_{\Pmatone} \right] \le (1-\tfrac{\rho}{2})^t \norm{\estone_{\kappa\kjumpvalidfrom}}^2_{\Pmatone} + \sum_{i=0}^{t-1}(1-\tfrac{\rho}{2})^ic,\qquad t\geq\validfrom,
		\]
       or
		\begin{align*}
			\EE_{\Y_{\kappa\kjumpvalidfrom}}&\left[\norm{\estone_{t+\kappa\kjumpvalidfrom}}^2_{\Pmatone} \right]  \le \norm{\estone_{\kappa\kjumpvalidfrom}}^2_{\Pmatone} +  (1-\tfrac{\rho}{2})^{-1}c, \qquad t\geq\validfrom.
		\end{align*}
		We can  conclude that there exists $\gamma^{(1)} = \gamma^{(1)}(\est_0) > 0$ such that
		\begin{equation}
		\label{e:estoneboundafter}
			\EE_{\Y_{\kappa\kjumpvalidfrom}}\bigl[\bigl\|\estone_t\bigr\|^2\bigr] \le \gamma^{(1)}\qquad \text{for all }t\ge\kappa\kjumpvalidfrom.
		\end{equation}
We can therefore conclude that
\[\EE_{\Y_{\kappa\kjumpvalidfrom}}\bigl[\bigl\|\est_t\bigr\|^2\bigr] \le \gamma^{(1)}+\gamma^{(2)}\qquad \text{for all }t\ge\kappa\kjumpvalidfrom.\]
Since the sequence $(\state_t)_{t\in\Nz}$ in \eqref{eq:system-a} is generated through the addition of independent mean-square bounded random variables and a bounded control input, and since $\kappa\kjumpvalidfrom < \infty$, it follows that there exists $\gamma > 0$ such that
		\[
			\EE_{\Y_{0}}\bigl[\norm{\state_t}^2\bigr] \le \gamma \qquad\text{for all }t\in\Nz,
		\]
		establishing the second claim \ref{maintheorem:msbdd} of Theorem \ref{thm:main}.
	\end{proof}
%
%
%
%
%
%

\begin{proof}[Proof of Corollary \ref{cor:main}]
   The proof of Corollary \ref{cor:main} follows exactly the same reasoning as in the proof of Theorem \ref{thm:main}, up to the constraints in \eqref{eq:constraint-2} and \eqref{eq:constraint-3}. Also, rewriting the constraints \eqref{eq:statevar} and \eqref{eq:inputvar} as \eqref{eq:constraint-2} and \eqref{eq:constraint-3}, respectively, can be done similarly to the way we rewrote the cost in Theorem \eqref{thm:main}. It remains to show the upper bounds $\alpha^*$ and $\beta^*$ in Assumption \eqref{ass:stability2}.
   
   The constraint \eqref{eq:statevar} can be upper-bounded as follows:
   \begin{align*}
    &\EE_{\Y_t}\left[\State_t\transp \mathcal S\State_t +\mathcal L\transp \State_t\right]\\
    &\quad = \EE_{\Y_t}\left[\norm{\Aa \state_t+\Ba\Inp_t+\Da\Wnoise_t}_{\mathcal S}^2 +\mathcal L\transp (\Aa \state_t+\Ba\Inp_t+\Da\Wnoise_t)\right]\\
    &\quad \leq 3\EE_{\Y_t}\left[\norm{\Aa \state_t}^2_{\mathcal S}+\norm{\Ba\Inp_t}_{\mathcal S}^2+\norm{\Da\Wnoise_t}_{\mathcal S}^2\right]+\mathcal L\transp \Aa\hat\state_t+|\mathcal L\transp\Ba\Inp_t|\\
    &\quad \leq 3\tr{\Aa\transp \mathcal S\Aa\EE_{\Y_t}\left[\state_t\state_t\transp\right]+\Da\transp \mathcal S\Da\var{\wnoise}} + \mathcal L\transp \Aa\hat\state_t\\
    &\quad\quad 3Nm\sigma_{\max}(\Ba\transp \mathcal S\Ba)U_{\max}^2+ +\norm{\mathcal L\transp \Ba}_1U_{\max},
   \end{align*}
where the first inequality follows from the fact that $(a+b+c)^2\leq 3(a^2+b^2+c^2)$, for any $a,b,c>0$, and the noise being zero-mean, the second inequality follows from applying norm bounds between the 2 and $\infty$-norms and H\"older's inequality.
As for the constraint \eqref{eq:inputvar}, it can be upper-bounded as follows:
\begin{align*}
& \EE_{\Y_t}\left[\Inp_t\transp \tilde{\mathcal S}\Inp_t\right]\leq \sigma_{\max}(S)\norm{\Inp_t}^2\leq Nm\sigma_{\max}(S)\norm{\Inp_t}_\infty^2\leq Nm\sigma_{\max}(S)U_{\max}^2.
\end{align*}
This completes the proof.
\end{proof}

\section{Examples}\label{sec:examples}
Consider the system \eqref{eq:system-a}-\eqref{eq:system-b} with the following matrices:
\[\A  = \smat{0.5 & 0 & 0 \\ 0 & 0 & -1\\ 0 & 1 & 0},
\B = \smat{1\\ 0 \\ 1}, \text{ and } C = I.\]
The simulation data was chosen to be $\state_0\sim\mathcal N(0,I)$, $w_t \sim \mathcal N(0,10I)$, $v_t = \mathcal N (0,10I)$, $\Wx = I$, $\Wu = 1$, $N=5$, $N_c=\kappa =2$, and $\varphi$ the usual piecewise linear saturation function with $\varphi_{\max}=1$. For this example the theoretical bound on the input is $U^*_{\max}=168.6783$ for a choice of $\epsilon = 10$.

We simulated the system for the discrete-time interval $[0,200]$ using Algorithm \ref{algo:general}, without the constraints \eqref{eq:statevar} and \eqref{eq:inputvar}. The coding was done using MATLAB and the optimization problem was solved using \texttt{yalmip} \citep{YALMIP} and \texttt{sdpt3} \citep{ref:sdpt3}. The computation of the matrices $\LambdaPhiE(P^\circ)$, $\LambdaPhi(P^\circ)$, $\LambdaWPhi(P^\circ)$, and $\LambdaPhiPhi(P^\circ)$ was done off-line using the steady state error covariance matrix $P^\circ$, as discussed in the previous section, via classical Monte Carlo integration \citep[Section 3.2]{RobertCasella-04} using $10^5$ samples.

The norms of the state trajectories for 200 different sample paths of the process and measurement noises are plotted in Figure \ref{fig:states} starting from $\state_0=\smat{97.38 & 100.19 & 99.78}^T$. The average state norm as well as the standard deviation of the state norm are depicted in Figure \ref{fig:statistics}, where it is clear that the proposed controller renders the system mean-square bounded. The average total cost normalized by time for this simulation is plotted in Figure \ref{fig:cost}.
\begin{figure}[h]
\centering
  \includegraphics[width=0.6\columnwidth]{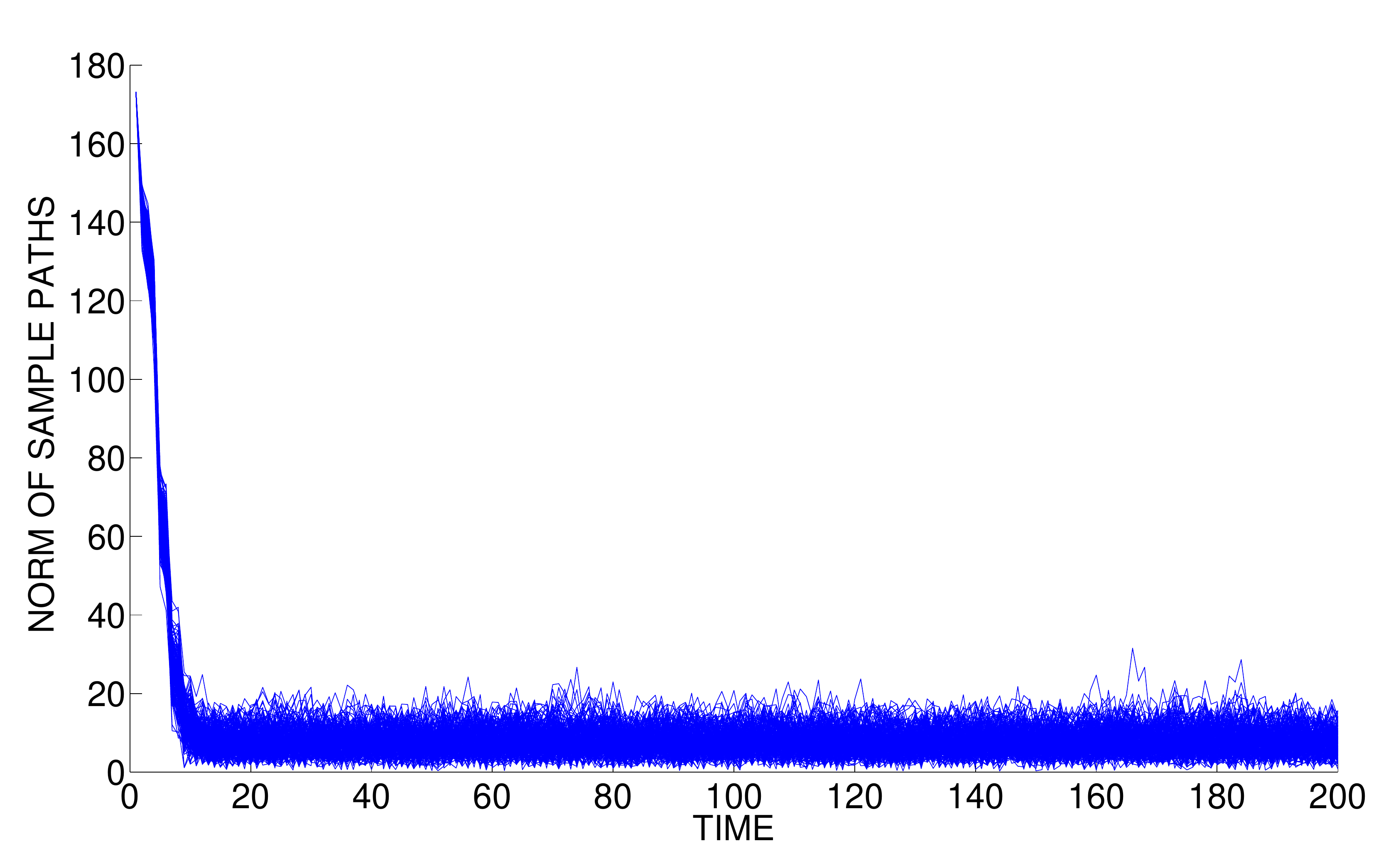}\\
  \caption{State norm for 200 realizations of the process and measurement noise sequences}\label{fig:states}
\end{figure}

\begin{figure}[h]
\centering
  \includegraphics[width=0.6\columnwidth]{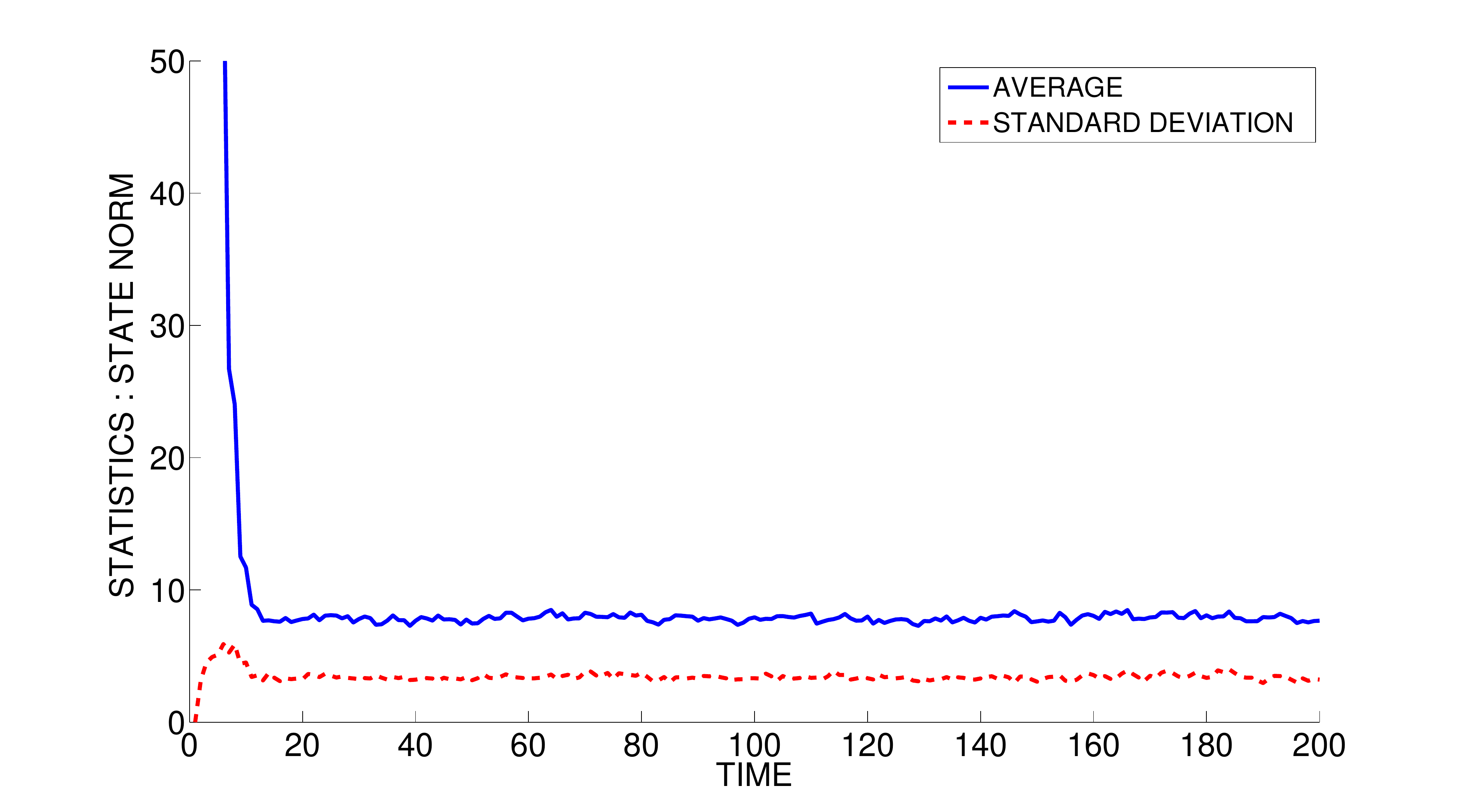}\\
  \caption{Average and standard deviation of the state norm for 200 realizations of the process and measurement noise sequences  }\label{fig:statistics}
\end{figure}

\begin{figure}[h]
\centering
  \includegraphics[width=0.6\columnwidth]{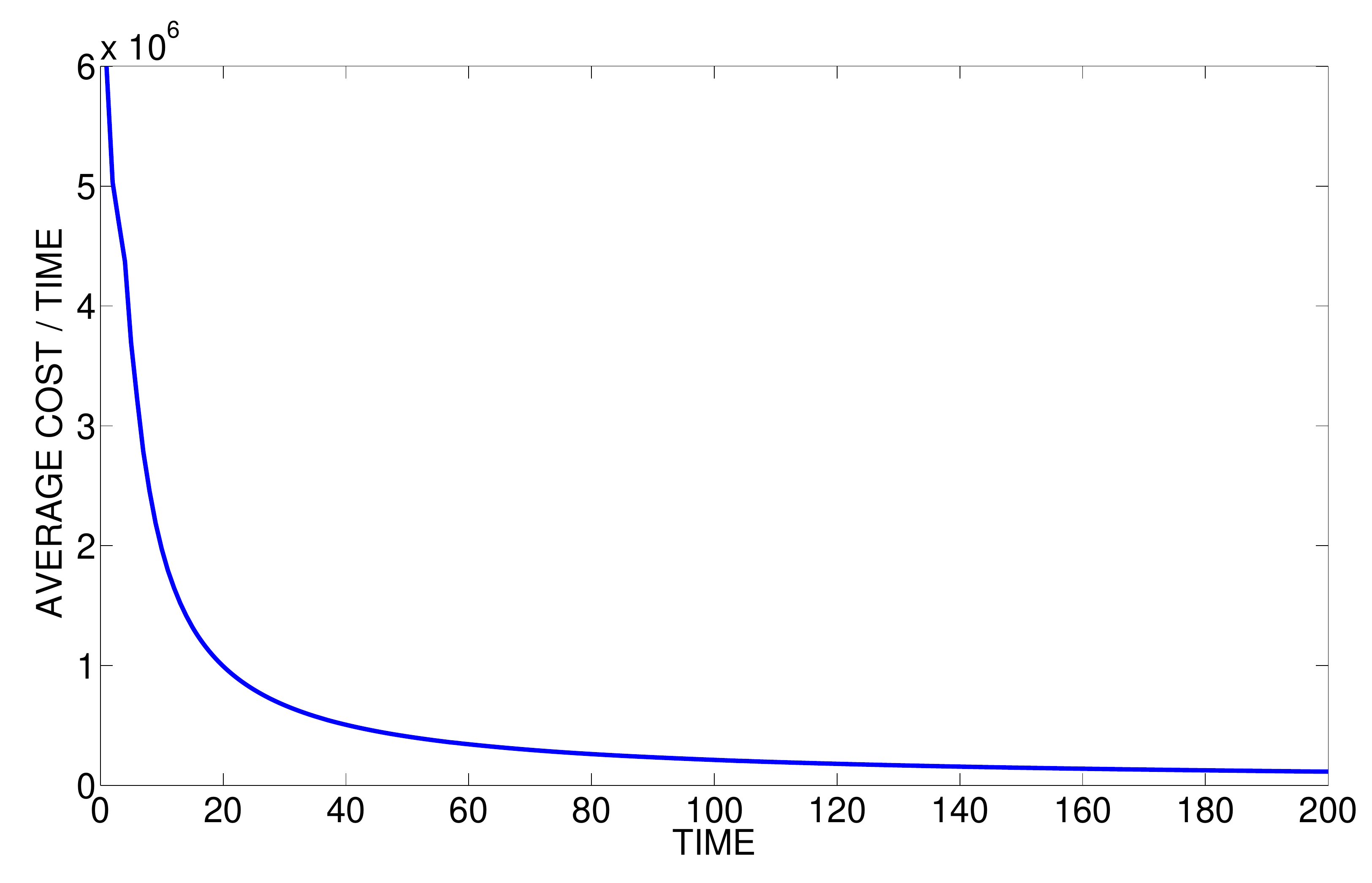}\\
  \caption{Total average cost}\label{fig:cost}
\end{figure}

\section{Conclusions}\label{sec:conclusions}
We presented a method for stochastic receding horizon control of discrete-time linear systems with process and measurement noise and bounded input policies. We showed that the optimization problem solved periodically is successively feasible and convex. Moreover, we illustrated how a certain stability condition can be utilized to show that the application of the receding horizon controller renders the state of the system mean-square bounded. We discussed how certain matrices in the cost function can be computed off-line and provided an example that illustrates our approach.



\appendix

\section{Proof of Proposition \ref{prop:1}} \label{appendixA}

For $t=0$, $\state_0\sim \mathcal N(\est_{0|-1},\Pmat_{0|-1})$, with $\Pmat_{0|-1}>0$,
by assumption. Assume now that, for a given $t\geq 0$,
$f(\state_t|\Y_{t-1})$ is normal with mean $\est_{t|t-1}$ and
covariance matrix $\Pmat_{t|t-1}>0$. By Assumption \ref{ass:dynamics}-\ref{ass:distributions} and dynamics 
in \eqref{eq:system-b}, $f(\out_t|\state_t,\Y_{t-1})$ is also normal with mean $\C \state_t$ and
covariance matrix $\var{v}$. Hence, applying
Bayes rule  we may write
\[f(\state_t|\Y_t)=\frac{ f(\out_t|\state_t,\Y_{t-1}) f(\state_t|\Y_{t-1})}{f(\out_t|\Y_{t-1})},\]
where we have
$f(\out_t|\Y_{t-1})=\int  f(\out_t|\state_t,\Y_t)f(\state_t|\Y_{t-1}) \text{d} x_t$ by the Chapman-Kolmogorov equation. It follows that
\begin{align*}f(\state_t|\Y_t)&\propto f(\out_t|\state_t,\Y_{t-1})
f(\state_t|\Y_{t-1})\\
&\propto \exp\biggl( -\frac{1}{2}\Bigl[(\out_t-\C \state_t)\transp
\var{\vnoise}^{-1}(\out_t-\C \state_t)+(\state_t-\hat{\state}_{t|t-1})\transp
P_{t|t-1}^{-1}(\state_t-\hat{\state}_{t|t-1}) \Bigr]\biggr),
\end{align*}
where the proportionality constants do not depend on $\state_t$. Let us
now focus on the term within square brackets and write $\state$, $\out$,
$\hat{\state}$ and $P$ in place of $\state_t$, $\out_t$, $\hat{\state}_{t|t-1}$ and
$P_{t|t-1}$ for shortness. Expanding the products and collecting the
linear and quadratic terms in $\state$ one gets
\begin{align*}
& \state\transp (\C\transp \var{\vnoise}^{-1}\C+P^{-1})\state-2\state\transp (\C\transp \var{\vnoise}^{-1}\out+P^{-1}\hat{\state}) +(\out\transp\var{\vnoise}^{-1}\out+\hat{\state}\transp P^{-1}\hat{\state}).
\end{align*}
Since $P>0$ and $\var{\vnoise}>0$ by assumption, it follows that $\C\transp \var{\vnoise}^{-1}\C+P^{-1}>0$. If we let  $P_*=(\C\transp \var{\vnoise}^{-1}\C+P^{-1})^{-1}$, the expression
above can be rewritten as
\begin{align*}
& (P_*^{-1} \state)\transp  P_* (P_*^{-1} \state)-2 (P_*^{-1} \state)\transp P_* (\C\transp \var{\vnoise}^{-1}\out+P^{-1}\hat{\state})+(\out\transp\var{\vnoise}^{-1}\out+\hat{\state}\transp P^{-1}\hat{\state}).
\end{align*}
Completing the square, the latter expression becomes
\begin{eqnarray*}
[P_*^{-1} \state-(\C\transp \var{\vnoise}^{-1}\out+P^{-1}\hat{\state})]\transp  P_*
[P_*^{-1} \state -(\C\transp \var{\vnoise}^{-1}\out+P^{-1}\hat{\state})] + c,
\end{eqnarray*}
where $c$ depends on $\hat{\state}$ and $\out$ but not on $\state$. Factoring
$P_*^{-1}$ out of the two terms $P_*^{-1} \state-(\C\transp \var{\vnoise}^{-1}\out+P^{-1}\hat{\state})$ and simplifying yields
\begin{eqnarray*}
[ \state-P_*(\C\transp \var{\vnoise}^{-1}\out+P^{-1}\hat{\state})]\transp  P_*^{-1}
[ \state-P_*(\C\transp \var{\vnoise}^{-1}\out+P^{-1}\hat{\state})] + c.
\end{eqnarray*}
Hence,
\[f(\state_t|\Y_t)\propto \exp\left( -\frac{1}{2} (\state_t-\hat{\state}_*)\transp  P_*^{-1}
(x_t-\hat{x}_*)\right),\] 
with $\hat{x}_*=P_*(\C\transp \var{v}^{-1}y+P^{-1}\hat{x})$, where the missing normalization constant
does not depend on $x_t$. Hence, $f(x_t|\Y_t)$ is normal with mean
$\hat{x}_{t|t}=\hat{x}_*$ and covariance matrix $P_{t|t}=P_*$. Using
the matrix inversion lemma, $P_*=P-P\C\transp (\C P\C\transp +\var{v})^{-1}\C P$, which is~\eqref{eq:cupd}. To obtain~\eqref{eq:mupd},
replace $y$ by $(y-\C\hat{x})+\C\hat{x}$ in the expression
of $\hat{x}_*$ to get
\begin{align*}
\hat{x}_*&=P_*\left[\C\transp \var{v}^{-1}(y-\C\hat{x})+ \C\transp \var{v}^{-1}\C\hat{x} +P^{-1}\hat{x}\right] \\
&=P_*\C\transp \var{v}^{-1}(y-\C\hat{x}) + P_* (\C\transp \var{v}^{-1}\C+ P^{-1})\hat{x} \\
&=P_*\C\transp \var{v}^{-1}(y-\C\hat{x}) +\hat{x}.
\end{align*}
Finally, using the identity $I=(\C P \C\transp  +\var{v})(\C P \C\transp  +\var{v})^{-1}$ (where the inverse exists because $P\geq 0$ and $\var{v}>0$) and the
definition of $P_*$,
\begin{align*}
P_*\C\transp \var{v}^{-1}&= P_*\C\transp \var{v}^{-1}(\C P \C\transp  +\var{v})(\C
P\C\transp
+\var{v})^{-1}\\
&=P_*(\C\transp \var{v}^{-1}\C P \C\transp  +\C\transp )(\C P \C\transp
+\var{v})^{-1} \\
&=P_* (\C\transp \var{v}^{-1}\C +P^{-1})P\C\transp  (\C P \C\transp
+\var{v})^{-1} \\
&=P\C\transp  (\C P\bar  C\transp  +\var{v})^{-1},
\end{align*}
which leads to~\eqref{eq:mupd}.

To conclude the proof, we need to
compute the density $f(\state_{t+1}|\Y_{t})$ and prove~\eqref{eq:mpred}--\eqref{eq:cpred}.
Using the Chapman-Kolmogorov equation, we can compute the density as follows
\begin{equation}\label{eq:pred}
 f(\state_{t+1}|\Y_t)= \int f(\state_{t+1}|\state_t,\Y_t)f(\state_t|\Y_t)\text{d} \state_t.
\end{equation}
However, the explicit computation of
the integral in \eqref{eq:pred} is a very tedious exercise. We shall
instead rely on characteristic functions. Recall that the
characteristic function of an $n$-dimensional Gaussian random vector
$\xi$ with mean $\mu$ and covariance $\Sigma\geq 0$ is given by
$\Psi(z)=\EE[\exp{(\ii z\transp\xi)}]=\exp\left(\ii z\transp \mu-\frac{1}{2}z\transp \Sigma z\right)$, where $\ii\Let\sqrt{-1}$ and $z\in\RR^n$. The characteristic function of $\state_{t+1}$ given $\Y_t$ is then
\begin{align*}
\Psi(z)&=\EE[\exp(\ii z\transp \state_{t+1})|\Y_t] \\
&=\EE\left[\EE\big[\exp\big(\ii z\transp (\A \state_t+\B \inp_t+\wnoise_t) \big)\,\big|\,\state_t,\Y_t\big]\,\big|\,\Y_t\right] \\
&=\EE\left[\exp\big(\ii z\transp \A \state_t+ \ii z\transp \B g_t(\Y_t)\big)\right.\times \left.\EE\big[\exp(\ii z\transp \wnoise_t )\big|\state_t,\Y_t\big]\big|\Y_t\right],
\end{align*}
where we have used system dynamics in \eqref{eq:system-a} and the general definition of the feedback policies \eqref{e:policies}. Since $\wnoise_t$
is independent of $\state_t$ and $\Y_t$ and it is Gaussian with mean $0$
and covariance $\var{\wnoise}$, $\EE\big[\exp(\ii z\transp \wnoise_t)\big|x_t,\Y_t\big]=\exp\left(-\frac{1}{2}z\transp \var{\wnoise}z\right)$. Therefore the last expression in the chain becomes
\[
	\EE\left[\exp\left(\ii z\transp \A \state_t\right)\ |\ \Y_t\right]\exp\left(\ii z\transp \B g_t(\Y_t)\right)\exp\left(-\frac{1}{2}z\transp \var{\wnoise}z\right).
\]
It was proved above that, conditionally on $\Y_t$, $\state_t$ is Gaussian with mean $\est_{t|t}$ and covariance matrix $\Pmat_{t|t}$. Hence
\begin{align*}
    \EE[\exp(\ii z\transp \A \state_t)|\Y_t] & = \EE\left[\exp\big(\ii (\A\transp z)\transp  \state_t\big)\big|\Y_t\right]\\
    &=\exp\left(\ii (\A\transp z)\transp \est_{t|t}-\frac{1}{2}(\A\transp z)\transp \Pmat_{t|t}(\A\transp z)\right),
\end{align*}
and consequently
\begin{align*}
	\Psi(z)&=\exp\left(\ii z\transp \A\hat{\state}_{t|t}-\frac{1}{2}z\transp \A P_{t|t}A\transp z\right)\times \exp\left(\ii z\transp \B g_t(\Y_t)\right)\exp\left(-\frac{1}{2}z\transp \var{w}z\right)\\
	&= \exp\Bigl(\ii z\transp \big(\A\est_{t|t}+\B g_t(\Y_t)\big) -\frac{1}{2}z\transp (\A P_{t|t}\A\transp +\var{w})z\Bigr),
\end{align*}
which is the characteristic function of a Gaussian random vector
with mean $\hat{x}_{t+1|t}=\A\hat{x}_{t|t}+\B g_t(\Y_t)$ and
covariance matrix $P_{t+1|t}=\A P_{t|t}\A\transp +\var{\wnoise}>0$.

\section{ }\label{appendixB}

	\begin{lemma}\label{prop:Pbounds}
		Consider the system \eqref{eq:system-a}-\eqref{eq:system-b}, and suppose that $(\A, \Sigma_\wnoise^{1/2})$ be stabilizable and $(\A, \C)$ be observable. In addition, assume that $P_{0|0}\geq 0$. Then there exist constants $\rho>0$ and an integer $T$ large enough such that
        \begin{equation}
		      \label{eq:Pbound}
			  \EE_{\Y_t}\bigl[\norm{\state_t - \hat\state_t}^2\bigr] = \tr{P_{t|t}} \leq \rho \qquad \text{for all }t\ge T.
		\end{equation}
	\end{lemma}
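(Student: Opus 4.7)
The plan is to leverage Proposition~\ref{prop:ARE}, which has already been stated in the excerpt, and to derive the required uniform bound on $\tr{P_{t|t}}$ from the asymptotic behavior of the Riccati recursion.

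First I would recall that by definition of the conditional covariance, together with Proposition~\ref{prop:1} (which guarantees that the conditional density of $\state_t$ given $\Y_t$ is Gaussian with mean $\hat\state_t$ and covariance $P_{t|t}$), we have
\[
    \EE_{\Y_t}\bigl[\norm{\state_t - \hat\state_t}^2\bigr] \;=\; \tr{P_{t|t}}.
\]
So the lemma reduces to exhibiting a uniform upper bound on $\tr{P_{t|t}}$ eventually.

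Next I would invoke Proposition~\ref{prop:ARE}: under the hypotheses $(\A, \Sigma_\wnoise^{1/2})$ stabilizable and $(\A, \C)$ observable, together with $\Sigma_\vnoise > 0$ from Assumption~\ref{ass:noisematrices}, the algebraic Riccati equation \eqref{eq:Riccatipred} has a unique positive semidefinite solution $P^*$, and the prediction covariance sequence $P_{t+1|t}$ converges to $P^*$ for any initial condition $P_{0|-1} \geq 0$. Applying the measurement update step \eqref{eq:cupd}, which is a continuous map from $P_{t+1|t}$ to $P_{t+1|t+1}$ (the inverse is well-defined since $\Sigma_\vnoise > 0$), it follows that $P_{t|t}$ converges to the limit
\[
    P^\circ \;=\; P^* - P^*\C\transp\bigl(\C P^* \C\transp + \Sigma_\vnoise\bigr)^{-1}\C P^*
\]
given in \eqref{eq:Riccatifilter}. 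Since the trace functional is continuous on the space of symmetric matrices, $\tr{P_{t|t}} \to \tr{P^\circ}$ as $t \to \infty$.

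Finally, setting $\rho \coloneqq \tr{P^\circ} + 1$ (or any strictly larger constant), the convergence $\tr{P_{t|t}} \to \tr{P^\circ}$ guarantees the existence of an integer $T$ such that $\abs{\tr{P_{t|t}} - \tr{P^\circ}} \leq 1$ for all $t \geq T$, yielding $\tr{P_{t|t}} \leq \rho$ for all $t \geq T$, as required. There is no serious obstacle here: the entire argument is a direct corollary of the convergence statement in Proposition~\ref{prop:ARE} combined with continuity of the filtering update and of the trace. The only mild care needed is to note that the convergence is from \emph{any} $P_{0|-1} \geq 0$ (so the assumption $P_{0|0} \geq 0$ suffices to initialize the recursion), and that while the bound $\rho$ depends on the system matrices and noise statistics, it is independent of $t$ once $t \geq T$.
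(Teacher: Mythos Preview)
Your argument is correct, but it differs from the paper's own proof. The paper does not invoke Proposition~\ref{prop:ARE} at all; instead it works directly with the controllability and observability Gramians: it shows that $\sum_{i=0}^{\kappa_1-1}\A^i\Sigma_\wnoise(\A^i)\transp$ and $\sum_{i=0}^{\kappa_2-1}(\A^i)\transp\C\transp\Sigma_\vnoise^{-1}\C\A^i$ are both positive definite (using full rank of the reachability and observability matrices together with $\Sigma_\wnoise,\Sigma_\vnoise>0$), and then appeals to a classical result of Jazwinski (Lemma~7.1 in \citep{ref:Jaz-70}) to obtain a uniform bound $P_{t|t}\le\rho' I$ valid for all $t\ge T\Let\max\{\kappa_1,\kappa_2\}$. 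The trace bound then follows from $\tr{P_{t|t}}\le n\lambda_{\max}(P_{t|t})\le n\rho'$.

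Your route---convergence of $P_{t+1|t}\to P^*$ via Proposition~\ref{prop:ARE}, then continuity of the filtering update and of the trace---is arguably more economical in this context, since it reduces the lemma to an immediate corollary of a proposition already stated in the paper. What you lose is explicitness: the paper's argument gives a concrete $T$ (at most the state dimension $n$) and a bound $\rho$ computable from the Gramians, whereas your $T$ depends implicitly on the rate of convergence of the Riccati recursion and is not quantified. Either approach is sufficient for the downstream use of the lemma (namely Fact~\ref{fact:factsonbounds} and Scholium~\ref{sch:thehorror}), which only requires existence of \emph{some} finite $T$ and $\rho$.
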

	\begin{proof}
		First, observe that
		\[
			\sum_{i=0}^{\kappa_1-1} \A^{i}\Sigma_{\wnoise}(\A^{i})\transp =
			\Bigl[\Sigma_{\wnoise}^{1/2}	\;	\A \Sigma_{\wnoise}^{1/2}	\;	\cdots\;	\A^{\kappa_1-1} \Sigma_{\wnoise}^{1/2}\Bigr]
			\Bigl[\Sigma_{\wnoise}^{1/2}	\;	\A \Sigma_{\wnoise}^{1/2}	\;	\cdots\;	\A^{\kappa_1-1} \Sigma_{\wnoise}^{1/2}\Bigr]\transp,
		\]
		and since $(\A, \Sigma_{\wnoise}^{1/2})$ is controllable by Assumption \ref{ass:stability1}-\ref{ass:stability1:systemandnoise}, we see that there exists $\kappa_1\in\NN$ such that for all $k \ge \kappa_1$ the rank of $\Bigl[\Sigma_{\wnoise}^{1/2}	\;	\A \Sigma_{\wnoise}^{1/2}	\;	 \cdots\;	 \A^{\kappa_1-1} \Sigma_{\wnoise}^{1/2}\Bigr] = n$; indeed, $\kappa_1$ is the reachability index of $(\A, \Sigma_{\wnoise}^{1/2})$. Thus, $\sum_{i=0}^{\kappa_1-1} \A^{i}\Sigma_{\wnoise}(\A^{i})\transp$ is positive definite, and therefore, there exists some $\delta_1', \delta_2' > 0$ such that
\begin{equation}\label{eq:delta1bound}
\delta_1' I \le \sum_{i=0}^{\kappa_1-1} \A^{i}\Sigma_{\wnoise}(\A^{i})\transp \le \delta_2' I.
\end{equation}
Second, observe that\footnote{Here $\otimes$ denotes the standard Kronecker product.}
		\[
			\sum_{i=0}^{\kappa_2 -1} (\A^{i})\transp\C\transp\Sigma_{\vnoise}^{-1}\C\A^{i} =
			\begin{bmatrix}
				\C\\
				\C\A\\
				\vdots\\
				\C\A^{\kappa_2 -1}
			\end{bmatrix}\transp
			\bigl(I_{\kappa_2 }\otimes\Sigma_{\vnoise}^{-1}\bigr)
			\begin{bmatrix}
				\C\\
				\C\A\\
				\vdots\\
				\C\A^{\kappa_2 -1}
			\end{bmatrix}.
		\]
		Since $(\A, \C)$ is observable by assumption, there exists $\kappa_2 \in\NN$ such that the rank of the matrix $\Bigl[\C\transp\;\A\transp\C\transp\;\cdots\;(\A^{\kappa_2 -1})\transp\C\transp\Bigr]\transp$ is $n$. The matrix $I_{\kappa_2 }\otimes \Sigma_{\vnoise}^{-1}$ is clearly positive definite by Assumption \ref{ass:stability1}'\ref{ass:stability1:systemandnoise}, and therefore, we see that there exists $\delta_1'', \delta_2'' > 0$ such that

		\begin{equation}\label{eq:delta2bound}
			\delta_1'' I \le \sum_{i=0}^{\kappa_2 -1} (\A^{i})\transp\C\transp\Sigma_{\vnoise}^{-1}\C\A^{i} \le \delta_2'' I.
		\end{equation}
		Third, the conditions of Lemma 7.1 in \cite[pp.\ 234]{ref:Jaz-70} are satisfied, as \eqref{eq:delta1bound} and \eqref{eq:delta2bound} hold with $\delta_1 = \min\{\delta_1', \delta_1''\}$ and $\delta_2 = \max\{\delta_2', \delta_2''\}$, and the bound $P_{t|t}\le \rho' I$ for some $\rho' > 0$ is established for all $t\ge T\Let \max\{\kappa_1,\ \kappa_2\}$. The assertion now follows immediately from:
		\[
			\EE_{\Y_t}\bigl[\norm{\state_t - \hat\state_t}^2\bigr] = \tr{P_{t|t}}\leq n\lambda_{\max}(P_{t|t})\leq n\rho' \teL \rho.\qedhere
		\]
	\end{proof}

\end{document}